\renewenvironment{quote}{\list{}{\leftmargin=8.5em \rightmargin=8.5em}\item[]}{\endlist}
\newtheorem{thm}{Theorem}[section]
\theoremstyle{plain}
\newtheorem{lem}[thm]{Lemma}
\newtheorem{prop}[thm]{Proposition}
\newtheorem{cor}[thm]{Corollary}
\newtheorem{dfn}[thm]{Definition}
\theoremstyle{definition}
\newtheorem{example}[thm]{Example}
\theoremstyle{remark}
\newtheorem{rem}[thm]{Remark}
\numberwithin{equation}{section}
\newcommand{\cA}{\mathcal A}
\newcommand{\A}{\mathcal A}
\newcommand{\C}{\mathbb C}
\newcommand{\Q}{\mathbb Q}
\newcommand{\Z}{\mathbb Z}
\newcommand{\var}{\varepsilon}
\newcommand{\la}{\lambda}
\newcommand{\ov}{\overline}
\newcommand{\osp}{\mathfrak{osp}}
\newcommand{\Qv}{{\Q(v)}}
\newcommand{\Qvp}{{\Q(v)^\pi}}
\newcommand{\cB}{{\mathcal{B}}}
\newcommand{\height}{{\operatorname{ht}}}
\newcommand{\zero}{{\bar{0}}}
\newcommand{\one}{{\bar{1}}}
\newcommand{\te}{{\tilde{e}}}
\newcommand{\tf}{{\tilde{f}}}
\newcommand{\set}[1]{\left\{#1\right\}}
\newcommand{\parens}[1]{\left(#1\right)}
\newcommand{\ang}[1]{\left\langle#1\right\rangle}
\newcommand{\bra}[1]{\left[#1\right]}
\newcommand{\bbinom}[2]{\begin{bmatrix}#1 \\ #2\end{bmatrix}}
\renewcommand{\bar}[1]{\overline{#1}}
\newcommand{\chqg}{\mathbf{f}}      
\newcommand{\tpchqg}{{\mathbf{f}[t^{\pm 1}]}}
\newcommand{\schqg}{{\mathbf{f}[\bt]}} 
\newcommand{\cqg}{{\mathbf{U}}}      
\newcommand{\tpcqg}{{\mathbf{U}[t^{\pm 1}]}}
\newcommand{\cmqg}{{\dot{\mathbf{U}}}}      
\newcommand{\Qvtp}{{\Q(v)[t^{\pm 1}]^\pi}}
\newcommand{\Qvbtp}{{\Q(v,\bt)^\pi}}
\newcommand{\bt}{{\mathbf{t}}}
\newcommand{\bC}{{\mathcal C}}
\begin{document}
\title{Quantum supergroups III. Twistors}

\author[Clark, Fan, Li and Wang]{Sean Clark, Zhaobing Fan, Yiqiang Li and Weiqiang Wang}
\address{Department of Mathematics, University of Virginia, Charlottesville, VA 22904}
\email{
sic5ag@virginia.edu (S.Clark),
ww9c@virginia.edu (W. Wang)
}
\address{Department of Mathematics\\ University at Buffalo, SUNY 
\\Buffalo, NY 14260}
\email{
zhaobing@buffalo.edu (Z.Fan),
yiqiang@buffalo.edu (Y.Li)
}
\date{}
\keywords{}
\subjclass{}

\maketitle

\begin{abstract}
We establish direct connections at several levels between quantum
groups and supergroups associated to bar-consistent anisotropic
super Cartan datum by constructing an automorphism (called twistor)
in the setting of covering quantum groups. The canonical bases of
the halves of quantum groups and supergroups are shown to match
under the twistor up to  powers of $\sqrt{-1}$. We further show that
the modified quantum group and supergroup  are isomorphic
 over the rational function field adjoined with  
 $\sqrt{-1}$, by constructing a twistor on the modified covering quantum group.
An equivalence of categories of weight modules for quantum
 groups and supergroups follows.
\end{abstract}


\vspace{3em}

\begin{quote}
{\em Le plus court chemin entre deux v\'{e}rit\'{e}s dans le domaine
r\'{e}el passe par le domaine complexe.}
\begin{flushright}{---Jacques Hadamard }
\end{flushright}
\end{quote}

\vspace{1em}

\section{Introduction}
\subsection{}

A theory of quantum supergroups was developed systematically by
Yamane \cite{Y1, Y2} after the classical work of Drinfeld, Jimbo and
Lusztig. Recently the interest in quantum supergroups has been
revived (see \cite{CW, CHW1, CHW2}) thanks to their categorification
\cite{HW} by Hill and one of the authors using the spin nilHecke and
quiver Hecke superalgebras \cite{W, EKL, KKT}. The work on quantum
supergroups of {\em anisotropic type} (meaning no isotropic odd
simple roots) has also motivated in turn further progress on
categorification. The conjecture in \cite{HW} that cyclotomic (spin)
quiver Hecke superalgebras categorify the integrable modules of the
supergroup has recently been proved by Kang, Kashiwara, and Oh
\cite{KKO13}. The validity of this conjecture at rank one, in which
case quiver Hecke superalgebras reduce to spin nilHecke algebras,
was  already noted in \cite{HW} as an easy upgrading of the
difficult categorification result of Ellis, Khovanov, and Lauda
\cite{EKL}. Yet another recent development is the categorification
of the modified covering quantum group in rank one
 (see Ellis-Lauda \cite{EL}).

A basic observation in \cite{HW} is that the parity functor $\Pi$
categorifies a formal super sign $\pi$ subject to $\pi^2=1$. This
leads to the formulation of the so-called covering quantum group
$\cqg$ in \cite{HW, CW, CHW1}, which allows a second formal
parameter $\pi$ such that $\pi^2=1$ besides the usual quantum
parameter $v$. The specialization of $\cqg$ at $\pi=1$, denoted by
$\cqg |_{\pi=1}$, recovers the usual quantum group while the
specialization of $\cqg$ at $\pi=-1$, denoted by  $\cqg |_{\pi=-1}$,
recovers the quantum supergroup of anisotropic type. In contrast to
the versions of quantum supergroups over $\C(v)$ studied in
literature, our covering (or super) quantum groups have a
well-developed representation theory such as weight modules and
integrable modules over $\Qv$, thanks to the enlarged Cartan
subalgebras \cite{CHW1}; moreover, they admit integral forms. Under
a mild bar-consistent condition on the super Cartan datum, the half
covering quantum group $\chqg$ ($\cong \cqg^{-}$) and the associated
integrable modules admit  a novel bar involution which sends $v
\mapsto \pi v^{-1}$ and then admit canonical bases \cite{CHW2}.

The (covering) quantum supergroups are quantizations of Lie
superalgebras associated to the anisotropic type super Cartan datum
introduced in \cite{Kac}. It has been known that Lie superalgebras
associated to the super Cartan datum  have representation theory
similar to that of Kac-Moody algebras associated to the same super
Cartan datum with $\Z_2$-grading forgotten; in particular, the
character formulas for the integrable modules of these Lie algebras
and superalgebras coincide. In the (only) finite type, this reduces
to the well-known fact that the finite-dimensional modules of Lie
superalgebra $\osp(1|2n)$ and Lie algebra $\mathfrak{so}(2n+1)$ have
the same characters. Such a similarity continues to hold at the
quantum level. But a conceptual explanation for all these
coincidences has been missing (see an earlier attempt \cite{La} in
finite type).

\subsection{}

The goal of this paper is to establish (somewhat surprising) direct links at several levels
between quantum groups and supergroups
associated to bar-consistent super Cartan datum, which
provide a conceptual explanation of the above coincidences.

We construct automorphisms (called twistors) denoted by $\Psi, \dot\Psi$ of
the half covering quantum group $\chqg$ and the modified  covering quantum group $\cmqg$,
respectively.
The construction of twistors requires
an extension of scalars to include a square root of $-1$, denoted by $\bt$ in this paper.
The twistor switches $\pi$ and $-\pi$, and hence specializes to an isomorphism between
the half (and resp., modified)  quantum group and its super counterpart.
As an immediate consequence, we obtain an equivalence of categories of weight modules for quantum
 group $\cqg|_{\pi=1}$ and supergroup $\cqg|_{\pi=-1}$.
 We also formulate an {\em extended covering quantum group} with enlarged Cartan subalgebra
 and construct its twistor.

Symbolically, we summarize the role of the twistor in the case of modified covering quantum group
in the following commutative diagram:

\begin{center}
\begin{tikzpicture}[scale=1]
\draw[->] (.15,1.3) arc (-50:220:.3);
\draw (-.05, 2.1) node {$\qquad\qquad \;\; \dot\Psi \; (\pi \mapsto -\pi)$};
\draw (.1,1) node {$\cmqg[\bt]$};
\draw (-2,-1) node {$\cmqg[\bt] |_{\pi=1}$};
\draw (2,-1) node {$\cmqg[\bt] |_{\pi=-1}$};

\draw[->>] (-.2,.7) -- (-1.6,-.6);
\draw[->>] (.2,.7) -- (1.6,-.6);
\draw[right hook->] (-1.9,-.6) -- (-.5,.7);
\draw[left hook->] (1.9,-.6) -- (.5,.7);
\draw[double,<->] (-1,-1) -- (1,-1) node[midway, above] {$\simeq$};
\end{tikzpicture}
\end{center}
Alternatively, one can view the modified quantum group $\cmqg
|_{\pi=1}$ and the modified quantum supergroup $\cmqg |_{\pi=-1}$ as
two different rational forms of a common algebra $\cmqg[\bt]
|_{\pi=1}$. The two rational forms admit their own distinct integral
forms. Remarkably the distinction between super  and non-super
algebras becomes blurred at the quantum level, even though a clear
distinction exists between Lie algebras and Lie superalgebras (for
example, there are ``more" integrable modules for Lie algebras than
for the corresponding Lie superalgebras \cite{Kac}).

 As an application, the twistor $\Psi$ induces a transformation
on the crystal lattice of $\chqg$ which behaves well with the crystal structure.
By careful bookkeeping, we provide a purely algebraic proof
of \cite[Proposition~6.7]{CHW2} that the crystal
lattice of $\chqg$ is invariant
under an anti-automorphism $\varrho$ which fixes the Chevalley generators.
Furthermore,  the twistor $\Psi$ is shown to match
Lusztig-Kashiwara's canonical basis for $\chqg|_{\pi=1}$ \cite{Lu1, K} with
the canonical basis for the half quantum supergroup $\chqg|_{\pi=-1}$ constructed in \cite{CHW2}, up to
integer powers of  $\bt$. Let us add that this does not give a new proof of the existence
of the canonical basis for $\chqg$ or for the integrable modules of $\cqg$.

\subsection{}

Although it is not very explicitly used in this paper, the
connection between (one-parameter) quantum groups and two-parameter
$(v,t)$-quantum groups developed by two of the authors \cite{FL12}
plays a basic role in our evolving understanding of the links
between quantum groups and supergroups. A connection between
(one-parameter) quantum groups and quantum supergroups can indeed be
formulated by a ``twisted lift" to two-parameter quantum groups
which is followed by a ``specialization" of the second parameter $t$
to $\bt$ with $\bt^2=-1$. But we have decided to adopt the more
intrinsic and self-contained approach as currently formulated in
this paper.

The isomorphism result on modified quantum (super)groups $\cmqg[\bt]
|_{\pi=1}$ and $\cmqg[\bt] |_{\pi=-1}$ in this joint work was
announced in \cite{FL13},  where the isomorphism in the rank one
case was established somewhat differently from here.

A version of our equivalence of categories of weight modules for
$\cqg|_{\pi=1}$ and $\cqg|_{\pi=-1}$ also appeared in \cite{KKO13}
with a very different proof. Note that the notion of weight modules
in {\em loc. cit.}  is nonstandard and subtle, and the
multi-parameter algebras formulated therein over $\C(v)$ or
$\C(v)^\pi$ do not seem to admit rational forms or integral forms or
modified counterparts as ours. Some construction similar to the
twistor $\widehat{\Psi}$ for our extended covering quantum group
(see Proposition~\ref{prop:qgiso})  also appeared in \cite{KKO13}.
In contrast to {\em loc. cit.}, our formula for $\widehat{\Psi}$ is
very explicit; the twistor $\dot\Psi$ here preserves the integral
forms (see Theorem~\ref{thm:modauto}), and this allows us to
specialize $v$ to be a root of unity without difficulty.

\subsection{}

The paper is organized as follows.

In Section~\ref{sec:half}, after recalling some preliminaries,
we formulate and establish a  twistor $\Psi$ of the half covering quantum group
$\chqg[\bt]$,
which restricts to an isomorphism between the super and non-super half quantum groups.
Here we make a crucial use of a new multiplication on $\chqg[\bt]$ twisted by a distinguished bilinear form,
and the general idea of such twisted multiplication goes back to \cite{FL12}.

In Section~\ref{sec:CB}, we use the twistor $\Psi$ to compare the
crystal lattices between the $\pi=1$ and $\pi=-1$ cases. In
particular, we give an algebraic proof that the crystal lattice for
$\chqg$ is preserved by an anti-involution $\varrho$. (This was
stated in \cite[Proposition~6.7]{CHW2}.) Then we show that the
twistor $\Psi$ matches the canonical basis elements of the half
quantum supergroup $\chqg |_{\pi=-1}$ and those of half quantum
group $\chqg |_{\pi=1}$, up to integer powers of $\bt$.

In Section~\ref{sec:modified}, we construct a twistor of the
modified covering quantum group. This restricts to an isomorphism
between the super and non-super modified quantum groups. An
immediate corollary is an equivalence of categories of weight
modules for the super and non-super quantum groups. A further
consequence is an equivalence of BGG categories of modules for
Kac-Moody Lie algebras and Lie superalgebras. Finally we construct
an alternative twistor relating quantum groups to quantum
 supergroups upon enlarging the Cartan subalgebras.

\vspace{.2cm} \noindent \textbf{Acknowledgements.} Y.L. is supported
in part by the NSF grant DMS-1160351, while S.C. and W.W. are
partially supported by the NSF grant DMS-1101268. S.C. was also
supported by a semester fellowship at University of Virginia. S.C.
and W.W. thank Institute of Mathematics, Academia Sinica, Taipei for
providing an excellent working environment and support, where part
of this project was carried out. W.W. thanks Shun-Jen Cheng and
Maria Gorelik for helpful discussions regarding the work of
Lanzman.

\section{The twistor of half covering quantum group}
 \label{sec:half}

\subsection{The preliminaries}
We review some basic definitions which can be found in \cite{CHW1, CHW2} and references therein.

\begin{dfn}
  \label{dfn:scd}
A {\em Cartan datum} is a pair $(I,\cdot)$ consisting of a finite
set $I$ and a $\Z$-valued symmetric bilinear form $\nu,\nu'\mapsto \nu\cdot\nu'$
on the free abelian group $\Z[I]$  satisfying
\begin{enumerate}
 \item[(a)] $d_i=\frac{i\cdot i}{2}\in \Z_{>0}, \quad \forall i\in I$;

  \item[(b)]
$a_{ij}=2\frac{i\cdot j}{i\cdot i}\in \Z_{\leq 0}$,  for $i\neq j$ in $I$.
\end{enumerate}
A Cartan datum is called a {\em super Cartan datum of anisotropic
type} if there is a partition $I=I_\zero\coprod I_\one$ which
satisfies the condition
\begin{enumerate}
        \item[(c)] $2\frac{i\cdot j}{i\cdot i} \in 2\Z$ if $i\in I_\one$ and $j \in I$.
\end{enumerate}
A super Cartan datum of anisotropic type is called  {\em
bar-consistent}
if it additionally satisfies
\begin{enumerate}
        \item[(d)]  $d_i\equiv p(i) \mod 2, \quad \forall i\in I.$
\end{enumerate}
\end{dfn}
We will always assume $I_\one\neq\emptyset$ without loss of
generality. We note that (d)  is almost always satisfied for super
Cartan data of finite or affine type (with one exception which
corresponds to a Dynkin diagram with two short roots of opposite
parity at its both ends, called by $A^{(4)}(0,2n)$). A super Cartan
datum is always assumed to be bar-consistent in this paper.
We note that a bar-consistent super Cartan datum satisfies
\begin{equation}  \label{eq:even}
i\cdot j\in 2\Z \quad \text{ for all }i,j\in I.
\end{equation}

The $i\in I_\zero$ are called even, $i\in I_\one$ are called odd. We
define a parity function $p:I\rightarrow\set{0,1}$ so that $i\in
I_{\overline{ p(i)}}$. We extend this function to the homomorphism
$p:\Z[I]\rightarrow \Z_2$. Then $p$ induces a {\em parity $\Z_2$-grading} on
$\Z[I]$.
We define the height function $\height$ on $\Z[I]$
by letting $\height(\sum_{i\in I} c_i i)=\sum_{i\in I} c_i$.

A {\em super root datum} associated to a super Cartan datum
$(I,\cdot)$ consists of
\begin{enumerate}
\item[(a)]
two finitely generated free abelian groups $Y$, $X$ and a
perfect bilinear pairing $\ang{\cdot, \cdot}:Y\times X\rightarrow \Z$;

\item[(b)]
an embedding $I\subset X$ ($i\mapsto i'$) and an embedding $I\subset
Y$ ($i\mapsto i$) satisfying

\item[(c)] $\ang{i,j'}=\frac{2 i\cdot j}{i\cdot i}$ for all $i,j\in I$.
\end{enumerate}

We will assume that the image of the imbedding $I\subset X$
(respectively, the image of the imbedding $I\subset Y$) is linearly
independent in $X$ (respectively, in $Y$);
in the terminology of \cite{L93}, this means the datum is both $X$-regular and
$Y$-regular.

If $V$ is a vector space graded by $\Z[I]$, $X$, or $Y$, we will use the weight notation
$|x|=\mu$ if $x\in V_\mu$. If $V$ is a $\Z_2$-graded vector space, we will
use the parity notation $p(x)=a$ if $x\in V_a$.


Let $v$ and $t$ be formal parameters, and let $\pi$ be an indeterminate
such that
$$
\pi^2=1.
$$
For a ring $R$ with $1$, we will form a new ring
$R^\pi=R[\pi]/(\pi^2-1)$. Given an $R^\pi$-module (or algebra) $M$,
the {\em specialization of $M$ at $\pi=\pm 1$} means the $R$-module
(or algebra) $M|_{\pi=\pm 1} \stackrel{\text{def}}{=}R_{\pm}\otimes_{R^\pi} M$,
where $R_\pm =R$ is viewed
as a $R^\pi$-module on which $\pi$ acts as $\pm 1$.

Assume 2 is invertible in $R$; i.e. $\frac{1}{2}\in R$.
We define
\begin{equation}\label{eq:pi idempotent}
\var_{+}=\frac{1+ \pi}{2},\qquad\var_{-}=\frac{1- \pi}{2},
\end{equation}
and note that $R^\pi=R\var_+\oplus R\var_-$.
In particular, since $\pi \var_{\pm}=\pm \var_{\pm}$
for an $R^\pi$-module $M$, we see that
\[M|_{\pi=\pm 1}\cong \var_{\pm } M.\]

Similarly, for an $R$-module $M$, we define
\[M[t^{\pm 1}]=R[t^{\pm 1}]\otimes_R M.\]
Let $\bt^2=-1 \in R$. Let us define the specialization of $t$ at $\bt$
to be
\[
M[\bt]=R[\bt]\otimes_{R[t^{\pm 1}]} M[t^{\pm 1}]=R[\bt]\otimes_R M.
\]
(Note that the results herein may be reformulated in a context where $\bt$ is replaced by an
indeterminate solution to the equation $t^4=1$.)


Recall $\pi^2=1$.  For $k \in \Z_{\ge 0}$ and $n\in \Z$,
we introduce a $(v,\pi)$-variant of quantum integers, quantum factorial and quantum binomial coefficients:

\begin{equation}
 \label{eq:nvpi}
\begin{split}
\bra{k}_{v,\pi} & 
=\frac{(\pi v)^k-v^{-k}}{\pi v-v^{-1}}  \in \Z[v^{\pm 1}]^\pi,
  \\
\bra{k}_{v,\pi}^!  &= \prod_{l=1}^k \bra{l}_{v,\pi}   \in \Z[v^{\pm 1}]^\pi,
 \\
\bbinom{n}{k}_{v,\pi} &=\frac{\prod_{l=n-k+1}^n  \big( (\pi v)^{l}
-v^{-l} \big)}{\prod_{l=1}^k \big( (\pi v)^{l}- v^{-l} \big)}  \in
\Z[v^{\pm 1}]^\pi.
\end{split}
\end{equation}

We will use the notation
$$
v_i=v^{d_i}, \quad t_i=t^{d_i}, \quad \pi_i=\pi^{d_i}, \quad \text{ for } i\in I.
$$

Let $(I,\cdot)$ be a super Cartan datum.
The {\em half covering quantum group} $\chqg$ \cite[\S 1]{CHW1} is the $\Qvp$-algebra with generators
$\theta_i$ ($i\in I$) and relations
\begin{equation}\label{eq:serrerel}
\sum_{k=0}^{b_{ij}} (-1)^k\pi^{\binom{k}{2}p(i)+kp(i)p(j)}\bbinom{b_{ij}}{k}_{v_i, \pi_i}
 \theta_i^{b_{ij}-k}\theta_j\theta_i^k=0 \quad \text{for all }i\neq j\in I,
\end{equation}
where
$$
b_{ij} = 1- a_{ij}.
$$
As first noted in \cite{HW},
the $\Q$-algebra $\chqg$ admits a bar involution $\bar{\phantom{c}}$ such that
\begin{equation}
 \label{eq:bar}
\ov{\theta_i} =\theta_i\; (\forall i\in I), \qquad \ov{\pi} =\pi, \qquad \ov{v} =\pi v^{-1}.
\end{equation}
We define the divided powers
\begin{equation}\label{eq:thetadivpow}
\theta_i^{(n)}=\frac{\theta_i^n}{\bra{n}^!_{v_i,\pi_i}}.
\end{equation}
These elements generate a $\Z[v^{\pm 1}]^\pi$-subalgebra
of $\chqg$,  denoted by ${}_\Z\chqg$. (In this paper, the notation $\Z[v^{\pm 1}]$ stands
for the ring of Laurent polynomias in $v$.) Note that $\theta_i^{(n)}$ is bar invariant.

By specialization at $\pi=\pm 1$, we obtain the usual half quantum
group $\chqg|_{\pi=1}$ and the half quantum supergroup
$\chqg|_{\pi=-1}$, respectively. By leaving $\pi$ as an
indeterminate, we can simultaneously address both cases.

The algebra $\chqg$ has a $\Z[I]\times \Z_2$-grading obtained by setting $|\theta_i|=i$
and $p(\theta_i)=p(i)$, for $i \in I$.
The algebra $\chqg$ is known \cite{HW, CHW1}
to be equipped with a nondegenerate symmetric bilinear form $(\cdot,\cdot)$
such that
\[(1,1)=(\theta_i,\theta_i)=1,\quad (\theta_ix,y)=(x,e_i'(y)),\]
where $e_i':\chqg\rightarrow \chqg$ is the map satisfying
\begin{equation}  \label{eq:e'}
e_i'(1)=0, \quad e_i'(\theta_j)=\delta_{ij},\quad e_i'(xy)=e_i'(x)y+\pi^{p(i)p(x)}v^{-i\cdot |x|} xe_i'(y).
\end{equation}
There exists \cite{CHW2} a (non-super) algebra anti-automorphism of $\chqg$ such that
\begin{equation}   \label{eq:rho}
\varrho(\theta_i)=\theta_i\; (\forall i\in I),
\qquad \varrho(xy)=\varrho(y)\varrho(x), \quad \forall x,y \in \chqg.
\end{equation}


\subsection{A twisted multiplication}
 \label{subsec:twist}

Fix once and for all a total order $<$ on $I$.
Recall the notation $d_i, a_{ij}$ from Definition~\ref{dfn:scd}.
Let $\phi:\Z[I]\times \Z[I]\rightarrow \Z$
be the bilinear form defined by: for $i,j \in I$,
\begin{equation}\label{eq:phidef}
\phi(i,j)=\begin{cases}
d_ia_{ij}&\text{ if } j<i,\\
d_i &\text{ if } j=i,\\
-2p(i)p(j)&\text{ if } j>i.\\
\end{cases}
\end{equation}
Set
\begin{equation*}
\delta_{i<j} =\begin{cases}
0, & \text{ if } i\not < j,
\\
1, & \text{ if } i<j.
\end{cases}
\end{equation*}
By abuse of notation we regard $\Z_2 =\{0,1\} \subset \Z$, and so by \eqref{eq:even} we have
\begin{equation*}
\phi(i,j)- \phi(j,i) =(-1)^{\delta_{i<j}} \big(i\cdot j +   2p(i)p(j) \big) \in 2\Z,\qquad \text{for }i\neq j.
\end{equation*}
In particular, we always have
\begin{equation}   \label{eq:phisymmetrized}
\phi(i,j)- \phi(j,i) \equiv  i\cdot j +   2p(i)p(j) \mod 4,\qquad \text{for }i\neq j.
\end{equation}

Recall that $\tpchqg$ denotes the $\Qvtp$-algebra $\Qvtp\otimes_{\Qvp}\chqg$.
Define a new multiplication $*$ on $\tpchqg$ by setting
\begin{equation}  \label{eq:x*y}
x*y=t^{\phi(|x|,|y|)} xy,
\end{equation}
for homogeneous $x,y\in \tpchqg$ and then extending it bilinearly.
Since $\phi$ is bilinear, one verifies that $(\tpchqg, *)$ is a $\Z[I]$-graded associative algebra
generated by $\theta_i$. We will use the notation $x^{*n}=\underbrace{x*x*\ldots*x}_{n}$
for powers taken with respect to this product.
We note that
\begin{equation}\label{eq:varrho*}
\varrho(x*y)=t^{\phi(|x|,|y|)-\phi(|y|,|x|)}\varrho(y)*\varrho(x), \quad \forall x, y\;\mbox{homogeneous}.
\end{equation}

\begin{prop}
The algebra $(\tpchqg,*)$ has a presentation as the $\Q(v)[t^{\pm 1}]^\pi$-algebra
with generators $\theta_i$ $(i\in I)$ and relations
\begin{equation}
 \label{eq:Serre*}
\sum_{k=0}^{b_{ij}}  (-1)^k \pi^{\binom{k}{2}p(i)+kp(i)p(j)}
 t^{k(b_{ij}-k)d_i + (b_{ij}-k)\phi(i,j)+k\phi(j,i)}
\bbinom{b_{ij}}{k}_{v_i,\pi_i}
 \!\!\theta_i^{*\, b_{ij}-k}*\theta_j*\theta_i^{*k}=0,
\end{equation}
for all $i\neq j\in I$.
\end{prop}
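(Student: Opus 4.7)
The plan is to verify the stated relations in $(\tpchqg,*)$ and then check completeness of the presentation. First, I would establish associativity of $*$ using bilinearity of $\phi$: both $(x*y)*z$ and $x*(y*z)$ equal $t^{\phi(|x|,|y|)+\phi(|x|,|z|)+\phi(|y|,|z|)}\,xyz$. Iterating gives, for homogeneous $y_1,\dots,y_n$,
\[
y_1 * \cdots * y_n = t^{\sum_{a<b} \phi(|y_a|,|y_b|)}\, y_1\cdots y_n.
\]

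Applying this to $\theta_i^{*\,(b_{ij}-k)} * \theta_j * \theta_i^{*k}$, I would split the index pairs according to the block structure of the word $i^{b_{ij}-k} j \, i^k$: pairs within each $\theta_i$-block contribute $\binom{b_{ij}-k}{2} d_i$ and $\binom{k}{2} d_i$; the $k(b_{ij}-k)$ cross-block $\theta_i$-$\theta_i$ pairs contribute $k(b_{ij}-k) d_i$ via $\phi(i,i)=d_i$; and the pairings of each $\theta_i$ with the middle $\theta_j$ contribute $(b_{ij}-k)\phi(i,j)$ and $k\phi(j,i)$. This produces an explicit $t$-power relating $\theta_i^{*\,(b_{ij}-k)} * \theta_j * \theta_i^{*k}$ to $\theta_i^{b_{ij}-k}\theta_j\theta_i^k$. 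After factoring an appropriate $k$-independent $t$-power out of the sum, the remaining coefficients should match those of the original Serre relation in $\chqg$, so that the displayed identity holds as a consequence of (\ref{eq:serrerel}).

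For completeness, let $F$ denote the $\Q(v)[t^{\pm 1}]^\pi$-algebra with the stated presentation; the previous step yields a surjection $F\twoheadrightarrow(\tpchqg,*)$. Specializing $t=1$ collapses the claimed relation to the Serre relation, and $*$ to the ordinary product, so that $F/(t-1)F$ and $(\tpchqg,*)/(t-1)(\tpchqg,*)$ both identify with $\chqg$. A flatness or graded-dimension comparison over $\Q(v)[t^{\pm 1}]^\pi$ (using the fact that $\tpchqg = \Q(v)[t^{\pm 1}]^\pi \otimes_{\Qvp}\chqg$ is free as a $\Q(v)[t^{\pm 1}]^\pi$-module) then promotes the surjection to an isomorphism.

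I expect the main obstacle to be the careful bookkeeping of the $t$-exponents: identifying precisely which overall $k$-independent $t$-power to factor out of the sum so that the remaining coefficients coincide term-by-term with the Serre coefficients $(-1)^k\pi^{\binom{k}{2}p(i)+kp(i)p(j)}\bbinom{b_{ij}}{k}_{v_i,\pi_i}$. It seems likely that a suitable rewriting of $\bbinom{b_{ij}}{k}_{v_i,\pi_i}$ (for instance via a $v\leftrightarrow v^{-1}$-type symmetry that introduces a $v_i^{k(b_{ij}-k)}$-like factor) must be invoked to match the displayed $t^{k(b_{ij}-k)d_i}$ contribution arising from the cross-block $\theta_i$-$\theta_i$ pairings.
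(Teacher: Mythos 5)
Your strategy is the correct (and essentially the only) one: associativity of $*$ follows from bilinearity of $\phi$, giving $y_1*\cdots*y_n = t^{\sum_{a<b}\phi(|y_a|,|y_b|)}\,y_1\cdots y_n$, and in particular
\[
\theta_i^{*\,(b_{ij}-k)}*\theta_j*\theta_i^{*k}
 = t^{\binom{b_{ij}}{2}d_i + (b_{ij}-k)\phi(i,j) + k\phi(j,i)}\,\theta_i^{b_{ij}-k}\theta_j\theta_i^{k},
\]
which is precisely what the paper calls ``straightforward.''  Your completeness argument via flatness over $\Qvtp$ is also fine. However, the hedge in your last paragraph flags a real problem, and the repair you suggest there cannot work: $\bbinom{b_{ij}}{k}_{v_i,\pi_i}$ is a Laurent polynomial in $v$ and $\pi$ with no $t$-dependence, so no $v\leftrightarrow v^{-1}$-type rewriting of the quantum binomial can possibly produce a power of the independent formal parameter $t$ needed to cancel the extra $t$-power.

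Carrying the unraveling to completion exposes the difficulty concretely.  Substituting the display above into \eqref{eq:Serre*} yields the total $t$-exponent $\binom{b_{ij}}{2}d_i + k(b_{ij}-k)d_i + 2(b_{ij}-k)\phi(i,j) + 2k\phi(j,i)$, whose $k$-dependent part $k(b_{ij}-k)d_i + 2k\bigl(\phi(j,i)-\phi(i,j)\bigr)$ does not vanish in general; thus \eqref{eq:Serre*} is \emph{not} an untwisting of the Serre relation \eqref{eq:serrerel}.  For instance, take $b_{ij}=1$, $p(i)=p(j)=1$, $i\cdot j=0$ (say $d_i=d_j=1$, both nodes odd, $a_{ij}=a_{ji}=0$): then \eqref{eq:Serre*} unravels in $(\tpchqg,*)$ to $(t^{2\phi(i,j)}-t^{2\phi(j,i)})\,\pi\,\theta_j\theta_i=0$, which is false since $\phi(i,j)-\phi(j,i)=\pm 2\neq 0$.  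So the relation \eqref{eq:Serre*} with the printed exponent does not hold in $(\tpchqg,*)$ and cannot be part of a presentation.  The exponent that makes the Proposition true by your method is $-(b_{ij}-k)\phi(i,j)-k\phi(j,i)$ (or any $k$-independent shift of it); the quantity printed in \eqref{eq:Serre*} is $\clubsuit$ from the proof of Theorem~\ref{thm:halfiso}, which plays a different role there---as a power of $\bt$ with $\bt^4=1$, absorbing contributions from both the $*$-twist and the change of variable $v\mapsto \bt^{-1}v$ inside the binomial.  In short: the computation you outlined, once pushed through, reveals that the displayed relation is misprinted, not that the binomial needs a rewriting.
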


\begin{proof} 
The relation \eqref{eq:Serre*} for $(\tpchqg,*)$ can be derived directly from the
Serre relation \eqref{eq:serrerel} for $\chqg$, and vice versa. As the computation is straightforward,
we skip the details.
\end{proof}

\begin{rem}
The twisted $*$-product on $\tpchqg$ is a variant of the transformation defined
in \cite[\S 4]{FL12} to relate one-parameter quantum group to two-parameter quantum group.
The precise formula for the bilinear form $\phi$ is new, and it plays a crucial role in this paper.
\end{rem}

\subsection{The twistor $\Psi$}  

Recall that we set $\bt^2=-1$ and
that $\schqg$ is the $\Qvbtp$-algebra $\Qvbtp\otimes_{\Qvtp}\tpchqg$.
By specializing $t$ and twisting $v$, we obtain
the following $\Q(\bt)$-algebra isomorphism which plays a fundamental role in this paper.

\begin{thm}\label{thm:halfiso}
There is a $\Q(\bt)$-algebra isomorphism $\Psi:\chqg[\bt] \rightarrow (\chqg[\bt], *)$
satisfying
\begin{equation}  \label{eq:psi}
\Psi(\theta_i)=\theta_i\, (i\in I), \quad \Psi(v)=\bt^{-1} v, \quad \Psi(\pi)=-\pi,\quad \Psi(xy)=\Psi(x)*\Psi(y).
\end{equation}
\end{thm}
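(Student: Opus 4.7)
The plan is to define $\Psi$ on the generating data of $\chqg[\bt]$ and check well-definedness directly. I set $\Psi(\theta_i)=\theta_i$, $\Psi(v)=\bt^{-1}v$, $\Psi(\pi)=-\pi$, and extend $\Q(\bt)$-linearly subject to $\Psi(xy)=\Psi(x)*\Psi(y)$. The only substantive verification is that $\Psi$ sends each defining Serre relation \eqref{eq:serrerel} to $0$ in $(\chqg[\bt],*)$; bijectivity will then follow from the parallel construction of $\Psi^{-1}$.

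To check well-definedness, I apply $\Psi$ to the LHS of \eqref{eq:serrerel} and rewrite each $*$-monomial in ordinary products. A direct expansion of $*$ gives
\[
\theta_i^{*(b_{ij}-k)}*\theta_j*\theta_i^{*k}=\bt^{D_k}\,\theta_i^{b_{ij}-k}\theta_j\theta_i^k,\qquad D_k=\binom{b_{ij}}{2}d_i+(b_{ij}-k)\phi(i,j)+k\phi(j,i).
\]
Using $\bt^{-2}=-1$ together with the bar-consistency $d_i\equiv p(i)\pmod 2$ (which forces $\Psi(\pi_iv_i)=\bt^{d_i}\pi_iv_i$ and $\Psi(v_i^{-1})=\bt^{d_i}v_i^{-1}$), a further calculation yields
\[
\Psi\!\left(\bbinom{b_{ij}}{k}_{v_i,\pi_i}\right)=\bt^{k(b_{ij}-k)d_i}\bbinom{b_{ij}}{k}_{v_i,\pi_i},\qquad \Psi(\pi^a)=\bt^{2a}\pi^a.
\]
Combining these, the $k$-th summand of $\Psi(\text{LHS of }\eqref{eq:serrerel})$, converted to ordinary products, equals the $k$-th summand of \eqref{eq:serrerel} itself multiplied by $\bt^{N_k}$, where
\[
N_k=2\binom{k}{2}p(i)+2kp(i)p(j)+k(b_{ij}-k)d_i+D_k.
\]

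The heart of the proof is the claim $N_k\equiv N_0\pmod 4$ for every $k$, whereupon the whole image becomes $\bt^{N_0}$ times the original Serre relation, which vanishes. Using $b_{ij}=1-a_{ij}$, $i\cdot j=d_ia_{ij}$, and the congruence \eqref{eq:phisymmetrized} to substitute $\phi(j,i)-\phi(i,j)\equiv -(i\cdot j+2p(i)p(j))\pmod 4$, the $k$-dependent part of $N_k-N_0$ collapses modulo $4$ to
\[
-(d_i-p(i))\,k(k-1)\;-\;2(i\cdot j)\,k.
\]
Bar-consistency forces $d_i-p(i)$ to be even and $k(k-1)$ is always even, so the first summand lies in $4\Z$; the derived identity \eqref{eq:even} forces $i\cdot j\in 2\Z$, so the second summand lies in $4\Z$ as well. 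Hence $N_k\equiv N_0\pmod 4$ unconditionally. For bijectivity, I construct the inverse $\Psi^{-1}:(\chqg[\bt],*)\to\chqg[\bt]$ symmetrically via $\theta_i\mapsto\theta_i$, $v\mapsto\bt v$, $\pi\mapsto-\pi$, $\Psi^{-1}(x*y)=\Psi^{-1}(x)\Psi^{-1}(y)$, and verify it by the same mod-$4$ bookkeeping; the compositions $\Psi\circ\Psi^{-1}$ and $\Psi^{-1}\circ\Psi$ act as the identity on generators and scalars, hence everywhere. The main obstacle throughout is this mod-$4$ congruence; it works out precisely because the asymmetric formula \eqref{eq:phidef} for $\phi$ and the hypotheses (b)--(d) of Definition~\ref{dfn:scd} have been calibrated so as to make all the $k$-dependent contributions vanish modulo $4$.
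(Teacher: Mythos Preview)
Your proof is correct and follows essentially the same strategy as the paper's: both expand the $*$-monomials, compute $\Psi$ of the $(v,\pi)$-binomial, and reduce to showing the $k$-dependent $\bt$-exponent is constant modulo $4$. The only difference is cosmetic: the paper splits into the cases $i<j$ and $i>j$ and computes the exponent $\clubsuit$ explicitly in each, whereas you invoke the congruence \eqref{eq:phisymmetrized} uniformly to collapse $N_k-N_0$ to $-(d_i-p(i))k(k-1)-2(i\cdot j)k$, which is a mild streamlining of the same computation.
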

The transformation $\Psi$ is called the {\em twistor} on $\schqg$.

\begin{proof}
Set
\[
S_{ij} :=\sum_{k=0}^{b_{ij}} (-1)^k (-\pi)^{\binom{k}{2}p(i)+kp(i)p(j)}
 \bbinom{b_{ij}}{k}_{\bt_i^{-1} v_i,(-\pi)_i}
 \theta_i^{*\, b_{ij}-k}*\theta_j*\theta_i^{*k}.
\]
To show such a $\Q(\bt)$-linear map $\Psi$ exists, it suffices to show that
the images of the generators satisfy \eqref{eq:serrerel} with respect to $*$;
that is,
\begin{equation}  \label{eq:Sij}
S_{ij} =0 \quad \text{for all }i\neq j\in I.
\end{equation}
To that end, fix $i\neq j\in I$. Unraveling the definition of $*$, we have
\begin{align*}
S_{ij}=\sum_{k=0}^{b_{ij}} &(-1)^k(-\pi)^{\binom{k}{2}p(i)+kp(i)p(j)} \bbinom{b_{ij}}{k}_{\bt_i^{-1} v_i,(-\pi)_i}\\
 &\times \bt^{(\binom{k}{2}+\binom{b_{ij}-k}{2}+k(b_{ij}-k))d_i
 + (b_{ij}-k)\phi(i,j)+k\phi(j,i)}\theta_i^{b_{ij}-k}\theta_j\theta_i^{k}.
\end{align*}
One verifies that $\binom{k}{2}+\binom{b_{ij}-k}{2}=\binom{b_{ij}}{2}-k(b_{ij}-k)$
and $\bbinom{b_{ij}}{k}_{\bt_i^{-1} v_i,(-\pi)_i}
=\bt^{k(b_{ij}-k)d_i}\bbinom{b_{ij}}{k}_{v_i,\pi_i}$.
Using these identities, we rewrite the above identity for $S_{ij}$ as
\begin{align}
\bt^{-\binom{b_{ij}}{2}d_i} S_{ij}
&= \sum_{k=0}^{b_{ij}} (-1)^k(-\pi)^{\binom{k}{2}p(i)+kp(i)p(j)}\bbinom{b_{ij}}{k}_{\bt_i^{-1} v_i,(-\pi)_i}
 \bt^{(b_{ij}-k)\phi(i,j)+k\phi(j,i)}\theta_i^{b_{ij}-k}\theta_j\theta_i^{k}
   \notag \\
&= \sum_{k=0}^{b_{ij}} (-1)^k(-\pi)^{\binom{k}{2}p(i)+kp(i)p(j)}\bbinom{b_{ij}}{k}_{v_i,\pi_i}
  \bt^{\clubsuit}\theta_i^{b_{ij}-k}\theta_j\theta_i^{k},
    \label{eq:tS}
\end{align}
where
\begin{equation}  \label{eq:club}
\clubsuit=k(b_{ij}-k)d_i + (b_{ij}-k)\phi(i,j)+k\phi(j,i).
\end{equation}

Now let us consider $\clubsuit$.
First assume that $i<j$.
Then we find that
\begin{align*}
\clubsuit
&= k(b_{ij}-k)d_i - 2(b_{ij}-k)p(i)p(j)+kd_ia_{ij}
  \\
&= -2\binom{k}{2}d_i+2kp(i)p(j)-2b_{ij}p(i)p(j).
\end{align*}
Next assume that $i>j$.
Then we have
\begin{align*}
\clubsuit
&= k(b_{ij}-k)d_i + (b_{ij}-k)d_ia_{ij}-2kp(i)p(j)
 \\
 &=
 -2\binom{k}{2}d_i+a_{ij}(b_{ij}-2k)d_i-2kp(i)p(j).
\end{align*}
Note that $2a_{ij} d_i \equiv 0 \mod 4$, thanks to \eqref{eq:even}.
In either case when $i<j$ or $i>j$, we see that
\[
\clubsuit= 2\binom{k}{2} d_i+2kp(i)p(j)+c(i,j) \mod 4,
\]
where
\begin{equation*}
c(i,j) =
\begin{cases}
2b_{ij}p(i)p(j), & \text{ if }i<j,
 \\ \\
-d_i \binom{a_{ij}}{2}, & \text{ if } i>j.
\end{cases}
\end{equation*}
Recall $\bt^2=-1$. By the bar-consistent condition we have $2d_i=2p(i) \mod 4$,
and thus
$\bt^\clubsuit=\bt^{c(i,j)}(-1)^{\binom{k}{2}p(i)+kp(i)p(j)}$.
Then we can rewrite \eqref{eq:tS}
and apply the Serre relation \eqref{eq:serrerel} for $\chqg$ to conclude that
\[\bt^{-\binom{b_{ij}}{2}d_i-c(i,j)} S_{ij}=\sum_{k=0}^{b_{ij}}
 (-1)^k\pi^{\binom{k}{2}p(i)+kp(i)p(j)}
 \bbinom{b_{ij}}{k}_{v_i,\pi_i}
 \theta_i^{b_{ij}-k}\theta_j\theta_i^{k}=0.
 \]
Therefore, \eqref{eq:Sij} is verified and $\Psi$ is well defined.

Finally, to see that $\Psi$ is an isomorphism, we note that
a similar argument can be used to  show that a map $\Phi: (\schqg, *) \rightarrow \schqg$ satisfying
\[\Phi(\theta_i)=\theta_i, \quad \Phi (v)=\bt v, \quad \Phi(\pi)=-\pi,\quad \Phi(x*y)=\Phi(x)\Phi(y),\]
is well defined as well; clearly $\Phi$ is the inverse of $\Psi$.
\end{proof}

Theorem~\ref{thm:halfiso} provides a way to compare the super and non-super half quantum groups via
$\Psi$. Indeed, recall the idempotents $\var_{\pm}$
from \eqref{eq:pi idempotent}. Then from
$
\Psi(\pi)=-\pi
$
, we see that
$
\Psi(\var_{\pm})=\var_{\mp}
$
. In particular, $\Psi(\var_{\pm}\schqg)=\var_{\mp}\schqg$, in effect
swapping the super and non-super specializations at $\pi=-1$ and $\pi=1$.

\begin{cor}\label{cor:half super to non}
There is a $\Q(\bt)$-linear isomorphism $\Psi:\schqg|_{\pi=1}\rightarrow \schqg|_{\pi=-1}$.
\end{cor}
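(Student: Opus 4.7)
The plan is to deduce Corollary~\ref{cor:half super to non} essentially immediately from Theorem~\ref{thm:halfiso}. First I would observe that the underlying $\Q(\bt)$-vector space of $(\schqg,*)$ coincides with that of $\schqg$; the twisted product $*$ alters only the algebra structure, not the additive structure. So the twistor $\Psi$ produced by Theorem~\ref{thm:halfiso} can be viewed at the same time as a $\Q(\bt)$-linear bijection $\schqg\to\schqg$.

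Next, I would apply the defining identity $\Psi(\pi)=-\pi$ from \eqref{eq:psi} to the idempotents $\var_{\pm}=\frac{1\pm\pi}{2}$, obtaining $\Psi(\var_{\pm})=\var_{\mp}$. Because $\var_{\pm}$ is of weight zero and $\phi$ is $\Z$-bilinear, one has $\var_{\pm}*x=t^{\phi(0,|x|)}\var_{\pm}x=\var_{\pm}x$ for every homogeneous $x\in\schqg$, and similarly $x*\var_{\pm}=x\var_{\pm}$. Consequently
\[
\Psi(\var_{+}x)=\Psi(\var_{+})*\Psi(x)=\var_{-}\Psi(x),
\]
so $\Psi$ carries $\var_{+}\schqg$ bijectively and $\Q(\bt)$-linearly onto $\var_{-}\schqg$.

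Finally, via the identifications $\schqg|_{\pi=\pm 1}\cong\var_{\pm}\schqg$ already recalled before the statement of the corollary, the restriction of $\Psi$ furnishes the desired $\Q(\bt)$-linear isomorphism $\schqg|_{\pi=1}\to\schqg|_{\pi=-1}$. The one nontrivial observation needed beyond Theorem~\ref{thm:halfiso} is that the weight-zero idempotents $\var_{\pm}$ behave trivially under the twist $*$, which is what allows $\Psi$ to descend to the $\pi=\pm 1$ specializations. I do not anticipate any genuine obstacle here: all real work has already been discharged in the proof of Theorem~\ref{thm:halfiso}, and what remains is only the bookkeeping just described.
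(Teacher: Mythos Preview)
Your proposal is correct and follows exactly the paper's approach: the paper also deduces the corollary by noting that $\Psi(\pi)=-\pi$ forces $\Psi(\var_{\pm})=\var_{\mp}$, hence $\Psi(\var_{\pm}\schqg)=\var_{\mp}\schqg$. Your added observation that the weight-zero idempotents $\var_{\pm}$ satisfy $\var_{\pm}*x=\var_{\pm}x$ (so that the $*$-multiplicativity of $\Psi$ really does give $\Psi(\var_{+}x)=\var_{-}\Psi(x)$) is a detail the paper leaves implicit, but it is exactly the right point to check.
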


Using the identification $\schqg|_{\pi=\pm 1}\cong \var_{\pm}\schqg$,
we have inclusions $\schqg|_{\pi=\pm 1}\hookrightarrow \schqg$.
Theorem~\ref{thm:halfiso} and Corollary~\ref{cor:half super to non} can be summarized symbolically in the following
diagram:

\begin{center}
\begin{tikzpicture}[scale=1]
\draw[<->] (-1.6,1.) -- (1.4,1.) node[midway,above] {$\Psi$};
\draw (-2,1) node {$\chqg[\bt]$};
\draw (2.3,1) node {$(\chqg[\bt],*)$};
\draw (-2,-1) node {$\chqg[\bt] |_{\pi=1}$};
\draw (2.3,-1) node {$\chqg[\bt] |_{\pi=-1}$};

\draw[right hook->] (-1.9,-.6) -- (-1.9,.6);
\draw[<<-] (2,-.6) -- (2,.6);
\draw[right hook->] (2.4,-.6) -- (2.4,.6);
\draw[<<-] (-2.3,-.6) -- (-2.3,.6);
\draw[snake=snake,-] (-.9,-1) -- (.9,-1);
\draw[<-] (-1.2,-1) -- (-.9,-1);
\draw[<-] (1.4,-1) -- (.9,-1);
\end{tikzpicture}
\end{center}

For $i_1,\ldots, i_n \in I$, we denote
\begin{align*}
\mathbf{N}(i_1+\ldots +i_n) &=\sum_{1\leq r<s\leq n} i_r\cdot i_s,
  \\
\mathbf{p}(i_1+\ldots +i_n) &=\sum_{1 \leq r<s \leq n}p(i_r)p(i_s).
\end{align*}
By convention, $\mathbf{N}(i_1)=\mathbf{p}(i_1)=0$.
Note that $\mathbf{N}(\cdot)$ is always an even integer by \eqref{eq:even}.

The following proposition on the $\Q(\bt)$-linear involution
$\varrho$ of $\schqg$ will be used in the next section.

\begin{prop}  \label{prop:rhopsi}
The involutions $\Psi\varrho\Psi^{-1}$ and $\varrho$ on $\schqg$ are
equal up to a sign on each weight space. More precisely, we have
\begin{equation}
  \label{eq;rhopsi}
\Psi\varrho\Psi^{-1}(x) =(-1)^{\frac{\mathbf{N}(\nu)}{2}+\mathbf{p}(\nu)} \varrho(x), \quad \text{ for } x \in \chqg_\nu.
\end{equation}
\end{prop}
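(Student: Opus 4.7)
The plan is to verify \eqref{eq;rhopsi} on a spanning family of weight-homogeneous ordered monomials $x=\theta_{i_1}\theta_{i_2}\cdots\theta_{i_n}$ of weight $\nu=i_1+\cdots+i_n$; since both sides are $\Q(v,\bt)^\pi$-linear and weight-preserving, this suffices. The key observation is that $\Psi$ acts very simply on such a monomial: iterating $x*y=t^{\phi(|x|,|y|)}xy$ yields
\[
\theta_{i_1}*\theta_{i_2}*\cdots*\theta_{i_n}=\bt^A\,\theta_{i_1}\theta_{i_2}\cdots\theta_{i_n},\qquad A=\sum_{1\le r<s\le n}\phi(i_r,i_s),
\]
so as a linear endomorphism of the underlying vector space, $\Psi$ scales the monomial by $\bt^A$ and $\Psi^{-1}$ scales it by $\bt^{-A}$.

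Next I would apply $\varrho$, which fixes $\bt$ and each $\theta_i$, thereby reversing the word; applying $\Psi$ to the reversed monomial produces a factor $\bt^B$, where after reindexing $B=\sum_{1\le r<s\le n}\phi(i_s,i_r)$. Combining these three steps gives
\[
\Psi\varrho\Psi^{-1}(x)=\bt^{B-A}\,\varrho(x),
\]
reducing the proof to computing the scalar $\bt^{B-A}$.

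The crux is to evaluate this scalar; since $\bt^4=1$, only $B-A\pmod 4$ matters. For $i_r\neq i_s$, equation \eqref{eq:phisymmetrized} gives $\phi(i_r,i_s)-\phi(i_s,i_r)\equiv i_r\cdot i_s+2p(i_r)p(i_s)\pmod 4$. For diagonal pairs $i_r=i_s$ the left side vanishes, while the right side reduces to $2d_{i_r}+2p(i_r)^2\equiv 4p(i_r)\equiv 0\pmod 4$ by the bar-consistent condition $d_i\equiv p(i)\pmod 2$, so the congruence extends uniformly to all pairs. Summing over $r<s$ yields $A-B\equiv\mathbf{N}(\nu)+2\mathbf{p}(\nu)\pmod 4$, which crucially depends only on $\nu$ and not on the ordering of $i_1,\ldots,i_n$, so the same sign applies across all monomials of weight $\nu$; using $\bt^2=-1$ one concludes $\bt^{B-A}=(-1)^{\mathbf{N}(\nu)/2+\mathbf{p}(\nu)}$, which is the claimed formula. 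The main obstacle is the careful modular bookkeeping, specifically the handling of the diagonal terms $i_r=i_s$, which lie outside the range of \eqref{eq:phisymmetrized} as stated and require the bar-consistent hypothesis to be absorbed into the $\pmod 4$ accounting.
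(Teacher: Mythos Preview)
Your argument is correct and is essentially the same as the paper's: both reduce to the congruence \eqref{eq:phisymmetrized} for $\phi(i,j)-\phi(j,i)$ and the observation that the resulting sign depends only on the weight $\nu$. The only difference is packaging—the paper organizes the same computation as an induction on height via $x*y$ (using \eqref{eq:varrho*}), whereas you unwind it directly on monomials $\theta_{i_1}\cdots\theta_{i_n}$; your explicit treatment of the diagonal case $i_r=i_s$ via bar-consistency is a nice touch that the paper leaves implicit.
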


\begin{proof}
We prove the formula \eqref{eq;rhopsi} by induction on the height $\height(|x|)$.

The formula clearly holds when $\height(|x|) \le 1$.

Now assume that the formula holds for $x$ with $\height (|x|)\ge 1$
and for $y$ with $\height(|y|)\ge 1$. Recall $\bt^2=-1$.
Then by \eqref{eq:rho}, \eqref{eq:phisymmetrized}, \eqref{eq:x*y},  \eqref{eq:varrho*}, and \eqref{eq:psi},  we have
\begin{align*}
\Psi \varrho \Psi^{-1} (x*y)
&= \Psi \big( \varrho (\Psi^{-1}(y))\, \varrho (\Psi^{-1}(x) ) \big)
  \\
&= \Psi \varrho \Psi^{-1}(y) * \Psi \varrho \Psi^{-1}(x)
  \\
&= (-1)^{\frac{\mathbf{N}(|y|)}{2}+\mathbf{p}(|y|)+\frac{\mathbf{N}(|x|)}{2}+\mathbf{p}(|x|)}
\varrho  (y)  * \varrho  (x)
  \\
&= (-1)^{\frac{\mathbf{N}(|y|)}{2}+\mathbf{p}(|y|)+\frac{\mathbf{N}(|x|)}{2}+\mathbf{p}(|x|)}
\bt^{\phi(|y|,|x|) -\phi(|x|,|y|)}  \varrho  (x*y)
  \\
&= (-1)^{\frac{\mathbf{N}(|x*y|)}{2}+\mathbf{p}(|x*y|)}\varrho(x*y).
\end{align*}
Hence the formula \eqref{eq;rhopsi} holds for $x*y$.  This completes the induction.

Since $\mathbf{N}$ and $\mathbf{p}$ only depend on the weight,
$\Psi\varrho\Psi^{-1}$ and $\varrho$ are proportional on each weight space.
The proposition is proved.
\end{proof}

\section{Comparison of crystal lattices and canonical bases}
 \label{sec:CB}


\subsection{Comparing crystal lattices}

For $x\in \chqg_\nu$, there is a unique decomposition of the form
\begin{equation}  \label{eq:string}
x=\sum_{n\geq 0} \theta_i^{(n)}x_n,
\end{equation}
such that $x_n=0$ for all but finitely many $n$,
$x_n\in \chqg_{\nu- ni}$, and $e_i'(x_n)=0$ for all $n$.
We will refer to this as its {\em $\bf i$-string decomposition}.
Then we define Kashiwara operators
\[\te_i x=\sum_{n\geq 1} \theta_{i}^{(n-1)}x_n,\]
\[\tf_i x=\sum_{n\geq 0} \theta_{i}^{(n+1)}x_n.\]

Let $\A\subset \Q(v)$ be the ring of rational functions with no poles at $v=0$
and so $\A^\pi=\A[\pi]\subset \Qvp$.
The crystal lattice ${\mathcal L}$ of $\chqg$ is the $\A^\pi$-lattice
generated by \[B=\set{\tf_{i_1}\ldots \tf_{i_n} 1\mid \forall  i_1,\ldots, i_n\in I, \forall n}.\]
According to \cite{CHW2},
the set $\cB :=(B\cup \pi B)+v{\mathcal L}$ is a $\Q$-basis of ${\mathcal L}/v{\mathcal L}$,
called the (maximal) crystal basis for $\chqg$.

We note the following useful properties of ${\mathcal L}$
(with the same proof as usual \cite{K}).

\begin{lem}\label{lem:latticefacts}
Let $x=\sum_{n\ge 0} \theta_i^{(n)} x_n$ be the $i$-string decomposition
of $x\in \chqg$. Then,
\begin{enumerate}
\item $x\in {\mathcal L}$ if and only if $x_n\in {\mathcal L}$
for all $n$.
\item If $x+v{\mathcal L}\in \cB$, then
$x=\theta_i^{(n)} x_n$ mod $v{\mathcal L}$ for some $n$
and $x_n+v{\mathcal L}\in \cB$.
\item If $\te_j x=0$ for all $j\in I$ then $x=0$; if $\te_j x\neq 0$ then
$\tf_j \te_j x=x$.
\end{enumerate}
\end{lem}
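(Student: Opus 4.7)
The plan is to adapt Kashiwara's original argument \cite{K} to the covering setting, where the chief technical burden is tracking the $\pi$-twists that arise in the derivation formula \eqref{eq:e'} for $e_i'$. The backbone of the proof is the interplay between the Kashiwara operators and the $\mathbf{i}$-string decomposition \eqref{eq:string}, controlled at the level of $v=0$ by the non-degenerate bilinear form $(\cdot,\cdot)$ and the adjointness $(\theta_i x,y)=(x,e_i'(y))$.

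For part~(1), the forward implication ($x\in\mathcal{L}$ implies each $x_n\in\mathcal{L}$) would be proved by reverse induction on $n$ along the $i$-string. The essential calculation is that, for $y\in\ker e_i'$, one has $\te_i(\theta_i^{(n)}y)\equiv\theta_i^{(n-1)}y\bmod v\mathcal{L}$, obtained by iterating \eqref{eq:e'} and unwinding the definition of $\te_i$ on a single string component. This lets one successively peel off the top of the $i$-string of $x$ using $\te_i^k$, each of which preserves $\mathcal{L}$, recovering $x_n\in\mathcal{L}$ inductively. Conversely, the inclusion $\theta_i^{(n)}\mathcal{L}\subset\mathcal{L}$ follows by writing $\theta_i^{(n)}$ as $\tf_i^n$ (up to the unit $[n]^!_{v_i,\pi_i}$) on vectors in $\ker e_i'$, and extending by bilinearity of the string decomposition.

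For part~(2), I would use part~(1) to expand $x=\sum_n\theta_i^{(n)}x_n$ with each $x_n\in\mathcal{L}$. The compatibility of $\te_i,\tf_i$ with $\mathcal{L}$ established in \cite{CHW2}, together with the crystal-graph structure, ensures that the set $B\cup\pi B$ modulo $v\mathcal{L}$ decomposes into a disjoint union indexed by which $i$-string component the basis element represents; consequently, the image of $x$ in $\mathcal{L}/v\mathcal{L}$ can contribute to at most one string summand. Choosing that $n$, the element $x_n+v\mathcal{L}$ is identified (up to a factor of $\pi$) as the image under iterated $\te_i$ of $x+v\mathcal{L}$, which lies in $\cB$ since $\te_i$ stabilises $\cB\cup\{0\}$. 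For part~(3), the first assertion uses that $\te_j x=0$ for every $j$ forces the $j$-string decomposition to collapse to $x=x_0\in\ker e_j'$ for every $j$; by \eqref{eq:e'} and the weight hypothesis $|x|\ne 0$, one has $(\theta_jy,x)=(y,e_j'(x))=0$ for all $j$ and all $y$, and non-degeneracy of $(\cdot,\cdot)$ on positive weight spaces forces $x=0$. The second assertion is immediate from the string-component formulas: if $x=\sum_{n\ge 0}\theta_j^{(n)}x_n$ and $\te_jx\ne 0$, then some $x_n$ with $n\ge 1$ is nonzero, and on each such component $\tf_j\te_j\theta_j^{(n)}x_n=\tf_j\theta_j^{(n-1)}x_n=\theta_j^{(n)}x_n$.

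The principal obstacle is the mod-$v\mathcal{L}$ congruence underlying part~(1). In the non-super setting, $\te_i\theta_i^{(n)}y\equiv\theta_i^{(n-1)}y\bmod v\mathcal{L}$ for $y\in\ker e_i'$ follows from a one-line calculation with $e_i'$, but here the twisted Leibniz rule $e_i'(xy)=e_i'(x)y+\pi^{p(i)p(x)}v^{-i\cdot|x|}xe_i'(y)$ injects extra signs and powers of $v$ into every step. One must verify that these factors reduce cleanly to constants at $v=0$ and that the sign collapses are consistent, which is precisely where the bar-consistency condition $d_i\equiv p(i)\bmod 2$ of Definition~\ref{dfn:scd}(d) is used; a careful accounting along the lines of \cite[\S 2]{CHW2} should push the argument through without further changes.
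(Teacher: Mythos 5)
The paper states this lemma with the parenthetical comment ``(with the same proof as usual \cite{K})'' and gives no argument of its own, so there is no in-text proof to compare against; the intended proof is the formal deduction from the crystal-lattice theorem of \cite{CHW2} (namely that $\te_i$, $\tf_i$ preserve $\mathcal{L}$ and that $\cB$ carries a crystal structure compatible with $i$-string decompositions). Your overall strategy is indeed this one, but several of your supporting claims are wrong, and one of them is a genuine gap.

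In part~(1) the identity $\te_i(\theta_i^{(n)}y) = \theta_i^{(n-1)}y$ for $y\in\ker e_i'$ is an \emph{exact} identity, read off immediately from the definition of $\te_i$ once one notes that $\theta_i^{(n)}y$ is already in $i$-string form; no $e_i'$-computation is involved, and it is not merely a congruence modulo $v\mathcal{L}$ as you assert. Likewise $\tf_i^n y = \theta_i^{(n)}y$ holds on the nose, so the hedge ``up to the unit $\qfact{n}_{v_i,\pi_i}$'' is both unnecessary and incorrect: $\bra{n}_{v,\pi}$ has a pole of order $n-1$ at $v=0$, so $\qfact{n}_{v_i,\pi_i}$ is not even an element of $\A^\pi$ for $n\geq 2$, let alone a unit. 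This also means the entire ``principal obstacle'' paragraph is chasing a nonexistent difficulty: the $\pi$-twisted Leibniz rule and the bar-consistency condition play no role here (they matter for the bar involution, not for the crystal lattice), and the lemma follows formally once the stability of $\mathcal{L}$ under $\te_i,\tf_i$ from \cite{CHW2} is in hand.

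The more serious problem is the second assertion of part~(3). Your component-wise computation $\tf_j\te_j\big(\theta_j^{(n)}x_n\big)=\theta_j^{(n)}x_n$ is correct for $n\geq 1$, but the $n=0$ component is killed by $\te_j$. Hence for $x=\sum_{n\ge 0}\theta_j^{(n)}x_n$ one gets $\tf_j\te_j x=\sum_{n\geq 1}\theta_j^{(n)}x_n = x - x_0$, which equals $x$ only when $x_0=0$; and $\te_j x\neq 0$ does \emph{not} force $x_0=0$ (take $x = x_0 + \theta_j x_1$ with both $x_0, x_1$ nonzero in $\ker e_j'$). The statement is meant for $x$ supported on a single $j$-string, or, in the intended application, for classes $x+v\mathcal{L}\in\cB$, which by part~(2) are single-string modulo $v\mathcal{L}$. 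Your argument never imposes that restriction, so as written it does not prove the claim.
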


To take advantage of Theorem~\ref{thm:halfiso}, we need to extend
scalars to include $\bt$. We let $\cA[\bt]=\Q(\bt)\otimes_\Q \cA$, the subring of $\Q(v,\bt)$
of rational functions with no poles at $v=0$. Then
set ${\mathcal L}[\bt]=\cA[\bt]^\pi\otimes_{\cA^\pi} {\mathcal L}$.

The isomorphism $\Psi$ in Theorem \ref{thm:halfiso}, which sends $v\mapsto \bt^{-1} v$
and $\pi\mapsto -\pi$, clearly preserves the $\Q(\bt)$-algebra $\A[\bt]^\pi$.

\begin{lem}\label{lem:psilattice} The following properties hold:
\begin{enumerate}
\item
$\Psi(\theta_i^{(n)})=\theta_i^{(n)}$ for $n\ge 1$;

\item $e_i'(\Psi(x))=\bt^{\phi(i,|x|-i)}\Psi(e_i'(x))$ for all homogeneous
$x\in \schqg$ and $i\in I$;

\item
Let $x\in \schqg_\nu$ with its $i$-string decomposition \eqref{eq:string} for a given $i\in I$.
Then $\Psi(x)$ has the following $i$-string decomposition
\[\Psi(x)=\sum_{n\geq 0} \bt^{\phi(ni,\nu)-n^2d_i} \theta_i^{(n)}\Psi(x_n).\]
\end{enumerate}
\end{lem}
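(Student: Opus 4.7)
The plan is to prove parts (1), (2), (3) in order: part (1) says $\Psi$ fixes divided powers, part (2) gives the compatibility of $\Psi$ with $e_i'$ up to a $\bt$-scalar, and part (3) combines them via a direct computation.

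For (1), I would compute $\Psi(\theta_i^n)$ and $\Psi([n]^!_{v_i,\pi_i})$ separately and show the same power of $\bt$ appears in each, so it cancels in the quotient. An induction on $n$ using $\phi(i,i)=d_i$ yields $\theta_i^{*n}=\bt^{\binom{n}{2}d_i}\theta_i^n$, so $\Psi(\theta_i^n)=\theta_i^{*n}=\bt^{\binom{n}{2}d_i}\theta_i^n$. For the scalar factor, the crucial identity is $\bt^{-1}=-\bt$, which gives $(-\pi)(\bt^{-1}v)=\bt\pi v$ and $(\bt^{-1}v)^{-1}=\bt v^{-1}$; thus under $\Psi$ both $\pi_i v_i$ and $v_i^{-1}$ get multiplied by the same factor $\bt^{d_i}$, giving $\Psi([n]_{v_i,\pi_i})=\bt^{(n-1)d_i}[n]_{v_i,\pi_i}$ and hence $\Psi([n]^!_{v_i,\pi_i})=\bt^{\binom{n}{2}d_i}[n]^!_{v_i,\pi_i}$. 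The matching powers cancel.

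For (2), I would induct on $\height(|x|)$. The base cases $x=1$ and $x=\theta_j$ follow immediately from \eqref{eq:e'}. For the inductive step, write $x=yz$ in $\chqg$ and expand $\Psi(x)=\bt^{\phi(|y|,|z|)}\Psi(y)\Psi(z)$. Applying \eqref{eq:e'} to the product $\Psi(y)\Psi(z)$ together with the inductive hypothesis produces two summands. Separately, compute the right-hand side $\bt^{\phi(i,|x|-i)}\Psi(e_i'(x))$ by first using \eqref{eq:e'} to split $e_i'(x)$, then applying $\Psi$ to each term via $\Psi(\pi)=-\pi$, $\Psi(v)=\bt^{-1}v$, and converting the resulting $*$-products to ordinary products. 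Matching coefficients reduces to two identities in the exponents of $\bt$; the first is pure bilinearity of $\phi$, while the second requires
\[\phi(i,|y|) - \phi(|y|,i) + i\cdot|y| + 2p(i)p(y) \equiv 0 \pmod{4}.\]
This is the main technical obstacle. It follows by extending the congruence \eqref{eq:phisymmetrized} bilinearly in the second slot (the $i=j$ case being supplied by the bar-consistent condition $d_i\equiv p(i) \bmod 2$), and then applying \eqref{eq:even} to conclude $2i\cdot|y|\equiv 0 \pmod{4}$.

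For (3), combine (1) and (2). Applying $\Psi$ termwise to \eqref{eq:string}, using (1) to pass $\Psi$ through $\theta_i^{(n)}$ and the definition of $*$, one has
\[\Psi(\theta_i^{(n)}x_n) = \theta_i^{(n)} * \Psi(x_n) = \bt^{\phi(ni,\,\nu-ni)}\theta_i^{(n)}\Psi(x_n),\]
and $\phi(ni,ni)=n^2 d_i$ collapses the exponent to $\phi(ni,\nu)-n^2 d_i$. To confirm this really is the $i$-string decomposition of $\Psi(x)$, part (2) gives $e_i'(\Psi(x_n))=\bt^{\phi(i,|x_n|-i)}\Psi(e_i'(x_n))=0$, while the weight condition $|\Psi(x_n)|=\nu-ni$ is automatic since $\Psi$ is $\Z[I]$-homogeneous.
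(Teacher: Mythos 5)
Your proposal is correct and follows essentially the same strategy as the paper's proof: for (1) you verify that the $\bt$-powers from $\Psi(\theta_i^n)=\theta_i^{*n}$ and from $\Psi\bigl([n]^!_{v_i,\pi_i}\bigr)$ both equal $\bt^{\binom{n}{2}d_i}$ and so cancel (the paper packages the same cancellation as a one-step recursion $\theta_i^{(n)}=[n]^{-1}_{v_i,\pi_i}\theta_i\theta_i^{(n-1)}$); for (2) you induct on height and reduce to the same key congruence that the paper handles at its $(\star\star)$ step via \eqref{eq:phisymmetrized}, bar-consistency, and \eqref{eq:even}; and (3) is the same bilinearity-of-$\phi$ computation combined with (1) and (2).
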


\begin{proof}
Recall the definitions \eqref{eq:nvpi} of $\bra{n}_{v,\pi}$ and \eqref{eq:psi} of $\Psi$. We have
\[
\Psi \big( \bra{n}_{v,\pi} \big) = \bra{n}_{\bt^{-1} v,-\pi}=\bt^{n-1}\bra{n}_{v,\pi}.
\]
We prove (1) by induction on $n$. The case when $n=1$ is  clear. Assume
$\Psi(\theta_i^{(n-1)})=\theta_i^{(n-1)}$. By definition of the divided power \eqref{eq:thetadivpow}, we have
\begin{align*}
\Psi(\theta_i^{(n)}) &= \Psi \big(\bra{n}_{v_i,\pi_i}^{-1} \theta_i  \theta_i^{(n-1)} \big)
 \\
 &=  \bt_i^{1-n}  \bra{n}_{v_i,\pi_i}^{-1} \Psi(\theta_i) * \Psi \big( \theta_i^{(n-1)} \big)
 \\
 &=  \bt_i^{1-n}  \bra{n}_{v_i,\pi_i}^{-1}  \bt_i^{n-1}  \theta_i  \theta_i^{(n-1)}
 =\theta_i^{(n)}.
\end{align*}

Now let us verify (2). It is trivial if $\height |x|\leq 1$.
Otherwise, it suffices to show that if (2) holds for $x,y\in \schqg$,
then it holds for $xy$.
By \eqref{eq:e'}  we compute
\begin{align*}
e_i'(\Psi(xy))&=\bt^{\phi(|x|,|y|)}e_i'(\Psi(x)\Psi(y))
 \\
&=\bt^{\phi(|x|,|y|)} \big( e_i'(\Psi(x))\Psi(y) +\pi^{p(i)p(x)}v^{-i\cdot |x|} \Psi(x) e_i'(\Psi(y)) \big)
 \\
&=\bt^{\phi(i,|y|)}e_i'(\Psi(x))*\Psi(y)+\pi^{p(i)p(x)}v^{- i\cdot |x|}
\bt^{\phi(|x|,i)}\Psi(x)*e_i'(\Psi(y))
 \\
&\stackrel{(\star)}{=}\bt^{\phi(i,|y|)+\phi(i,|x|-i)}\Psi(e_i'(x)y)+ \pi^{p(i)p(x)} v^{-i\cdot |x|}
\bt^{\phi(|x|,i)+\phi(i,|y|-i)}\Psi(xe_i'(y))
 \\
&\stackrel{(\star\star)}{=}\bt^{\phi(i,|y|)+\phi(i,|x|-i)}\Psi(e_i'(x)y)+(-\pi)^{p(i)p(x)}(\bt^{-1} v)^{- i\cdot |x|}
\bt^{\phi(i,|x|)+\phi(i,|y|-i)}\Psi(xe_i'(y))
 \\
&=\bt^{\phi(i,|x|+|y|-i)}\Psi \big(e_i'(x)y+\pi^{p(i)p(x)}v^{-i\cdot |x|} xe_i'(y) \big)\\
&=\bt^{\phi(i,|xy|-i)}\Psi(e_i'(xy)),
\end{align*}
where the equation $(\star)$ follows from the inductive assumption and \eqref{eq:psi}
and $(\star\star)$ follows from  \eqref{eq:phisymmetrized}.

Finally, we prove (3). Such an identity for $\Psi(x)$  follows
by the definition of $\Psi$ and (1), and the claim that this is an $i$-string decomposition
follows from (2).
\end{proof}

\begin{prop}  \label{prop:latticeinv}
The isomorphism $\Psi$ preserves the lattice ${\mathcal L}[\bt]$,
i.e., $\Psi({\mathcal L}[\bt])={\mathcal L}[\bt]$. Furthermore,
$\Psi$ induces an isomorphism $\Psi_0$ on ${\mathcal
L}[\bt]/v{\mathcal L}[\bt]$ such that
\[
\Psi_0(x)=\bt^{\ell(x)}x \qquad \forall x\in \mathcal B,
\]
where $\ell(x)$ is some integer depending on $x$.
\end{prop}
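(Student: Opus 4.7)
The plan is to prove both assertions simultaneously by induction on the height $\height(|b|)$ for $b$ ranging over the generating set $B$ of $\mathcal{L}$; the case of $\pi B \subset \mathcal{B}$ will follow from $\Psi(\pi) = -\pi = \bt^{2} \pi$. Since $\Psi$ sends $\A[\bt]^\pi$ bijectively onto itself (as $\bt^{-1} v$ is a unit in $\A[\bt]$) and $\mathcal{L}[\bt] = \A[\bt]^\pi \cdot B$, showing $\Psi(b) \in \mathcal{L}[\bt]$ for every $b \in B$ suffices for $\Psi(\mathcal{L}[\bt]) \subseteq \mathcal{L}[\bt]$; the opposite inclusion will then follow because the computation in the second part will exhibit $\Psi_0$ as a diagonal change of basis (with unit scalar entries) on each weight space of $\mathcal{L}[\bt]/v\mathcal{L}[\bt]$, hence bijective.

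For the inductive step of lattice preservation, I would write $b = \tf_i b'$ with $b' \in B$, and expand $b' = \sum_m \theta_i^{(m)} x_m$ as its $i$-string decomposition. By Lemma~\ref{lem:latticefacts}(1) each $x_m$ lies in $\mathcal{L}$. Lemma~\ref{lem:psilattice}(3) then gives
\[
\Psi(b') = \sum_{m} \bt^{\phi(mi,|b'|)-m^{2} d_i}\, \theta_i^{(m)}\, \Psi(x_m),
\]
which is itself an $i$-string decomposition because Lemma~\ref{lem:psilattice}(2) guarantees $e_i'(\Psi(x_m)) = 0$ whenever $e_i'(x_m) = 0$. Combined with the inductive hypothesis $\Psi(b') \in \mathcal{L}[\bt]$ and Lemma~\ref{lem:latticefacts}(1) applied over $\A[\bt]^\pi$, this forces $\Psi(x_m) \in \mathcal{L}[\bt]$ for each $m$. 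Since $b = \sum_m \theta_i^{(m+1)} x_m$, the formula $\Psi(uv) = \Psi(u) * \Psi(v)$ together with Lemma~\ref{lem:psilattice}(1) and the definition of $*$ yields
\[
\Psi(b) = \sum_{m} \bt^{\phi((m+1)i,|x_m|)}\, \theta_i^{(m+1)}\, \Psi(x_m),
\]
which lies in $\mathcal{L}[\bt]$ since $\theta_i^{(n)}\, \mathcal{L}[\bt] \subseteq \mathcal{L}[\bt]$.

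For the scalar identification modulo $v\mathcal{L}[\bt]$, I would apply Lemma~\ref{lem:latticefacts}(2) with the same $i$ to obtain $n_0 \ge 1$ and $y \in \mathcal{L}$ with $y + v\mathcal{L} \in \mathcal{B}$ and $b \equiv \theta_i^{(n_0)} y \pmod{v\mathcal{L}}$; the bound $n_0 \ge 1$ is forced because the $i$-string decomposition of $b = \tf_i b'$ has no $\theta_i^{(0)}$ term. As $\height(|y|) < \height(|b|)$, the induction hypothesis provides $\Psi(y) \equiv \bt^{\ell(y)} y \pmod{v\mathcal{L}[\bt]}$. Using that $\Psi(v\mathcal{L}[\bt]) \subseteq v\mathcal{L}[\bt]$ (a consequence of the first part, since $\Psi(v) = \bt^{-1} v$), I then compute
\[
\Psi(b) \equiv \Psi(\theta_i^{(n_0)} y) = \bt^{\phi(n_0 i,|y|)}\, \theta_i^{(n_0)}\, \Psi(y) \equiv \bt^{\phi(n_0 i,|y|)+\ell(y)}\, b \pmod{v\mathcal{L}[\bt]},
\]
so setting $\ell(b) := \phi(n_0 i,|y|) + \ell(y)$ closes the induction.

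The main obstacle, in my view, is ensuring that the identity in Lemma~\ref{lem:psilattice}(3) is genuinely an $i$-string decomposition of $\Psi(b')$ inside $\chqg[\bt]$ so that Lemma~\ref{lem:latticefacts}(1) is applicable after scalar extension; fortunately this is built into Lemma~\ref{lem:psilattice}(2) and survives the flat base change from $\A^\pi$ to $\A[\bt]^\pi$. Everything else is then routine bookkeeping, and the resulting recurrence $\ell(b) = \phi(n_0 i, |y|) + \ell(y)$ makes the diagonal action of $\Psi_0$ on the crystal basis $\mathcal{B}$ entirely explicit.
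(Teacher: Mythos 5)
Your forward argument — using Lemma~\ref{lem:latticefacts}(1),(3) and Lemma~\ref{lem:psilattice}(1)--(3), and inducting on height to show $\Psi(\mathcal L[\bt])\subseteq\mathcal L[\bt]$ and to derive the recursion $\ell(b)=\phi(n_0 i,|y|)+\ell(y)$ — is correct and matches the paper's proof in substance (the paper's exponent $\phi(ni,\nu)-n^2d_i+\ell(x_n)$ agrees with yours after expanding $\phi(n_0 i,|y|)=\phi(n_0 i,|b|)-n_0^2d_i$). A minor bookkeeping point: you set up the induction over $B$, but the element $y$ provided by Lemma~\ref{lem:latticefacts}(2) lives in $\mathcal B$, so as in the paper it is cleaner to induct directly over $\mathcal B$ (equivalently, over all heights at once), noting also that any $b\in B$ with $b\in v\mathcal L$ is handled trivially for lattice preservation.

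The place where you diverge from the paper is the reverse inclusion $\Psi(\mathcal L[\bt])\supseteq\mathcal L[\bt]$, and there you have a gap. You claim this follows because $\Psi_0$ is a unit-scalar diagonal map on $\mathcal L[\bt]/v\mathcal L[\bt]$, ``hence bijective.'' Bijectivity of $\Psi_0$ on the quotient together with $\Psi(\mathcal L[\bt])\subseteq\mathcal L[\bt]$ does not by itself yield $\Psi(\mathcal L[\bt])=\mathcal L[\bt]$: this is a Nakayama-type statement, and it requires that each weight space $\mathcal L[\bt]_\nu$ be a finitely generated module over $\cA[\bt]^\pi$ with $v$ in the Jacobson radical. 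That input is true but is not invoked or justified in your write-up, and without it the deduction fails (there are easy counterexamples over non-finitely-generated modules). The paper avoids this entirely and more cheaply: it observes that Lemma~\ref{lem:psilattice}(2),(3) admit exact analogues for $\Psi^{-1}$ obtained by replacing $\bt$ with $\bt^{-1}$, so the same induction that gave $\Psi(\mathcal L[\bt])\subseteq\mathcal L[\bt]$ also gives $\Psi^{-1}(\mathcal L[\bt])\subseteq\mathcal L[\bt]$, whence equality. You should either supply the Nakayama argument on weight spaces or, better, switch to the paper's symmetric argument via $\Psi^{-1}$.
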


\begin{proof}
We first observe that $\Psi({\mathcal L}[\bt])\subseteq {\mathcal L}[\bt]$,
as this follows from using induction on height along with Lemma~ \ref{lem:latticefacts}(1) and (3),
and Lemma~ \ref{lem:psilattice}(3). On the other hand,
Lemma \ref{lem:psilattice} can be rewritten in terms of $\Psi^{-1}$ (essentially by replacing
$\bt$ with $\bt^{-1}$ in (2) and (3)) and so a similar argument
shows $\Psi^{-1} ({\mathcal L}[\bt])\subseteq {\mathcal L}[\bt]$.
Therefore $\Psi({\mathcal L}[\bt]) ={\mathcal L}[\bt]$.

Let $x+ v{\mathcal L}[\bt] \in \mathcal B$. We proceed by induction on the height of $x$.
First note that $\Psi_0(1+v{\mathcal L}[\bt])=1+v{\mathcal L}[\bt]$
and $\Psi_0(\pi+v{\mathcal L}[\bt])=-\pi+v{\mathcal L}[\bt]$, so the proposition holds with
$\ell(1+v{\mathcal L}[\bt])=0$ and $\ell(\pi+v{\mathcal L}[\bt])=2$.

If $\height |x|\geq 1$,
then by Lemma~ \ref{lem:latticefacts}(2) and (3), there is an $i\in I$
such that we can write $x+ v{\mathcal L}[\bt]=\theta_i^{(n)}x_n+ v{\mathcal L}[\bt]$ with
$x_n+ v{\mathcal L}[\bt] \in \cB$ and $n>0$.
Then by induction on the height and Lemma~\ref{lem:psilattice}(3),
we have
\begin{equation*}\label{eq:psinoughtcrybas}
\Psi_0(x+ v{\mathcal L}[\bt])=\bt^{\phi(ni,\nu)-n^2d_i+\ell(x_n+v{\mathcal L}[\bt]^{\pi})}x+ v{\mathcal L}[\bt].
\end{equation*}
The proposition is proved.
\end{proof}

It was stated in \cite[Proposition 6.7]{CHW2} that ${\mathcal L}$ is
$\varrho$-invariant. In contrast to the non-super setting (as done
by Lusztig and Kashiwara), this is not easy to verify algebraically
using the tools in {\em loc. cit.} because the bilinear form on
${\mathcal L}/v{\mathcal L}$ is not positive definite. Here we are
in a position to furnish an algebraic proof of \cite[Proposition
~6.7]{CHW2}.

\begin{prop}
The involution $\varrho$ preserves ${\mathcal L}$, i.e.,
$\varrho({\mathcal L})={\mathcal L}$.
\end{prop}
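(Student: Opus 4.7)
The plan is to bootstrap from the classical (non-super) case using the twistor $\Psi$ to transport the statement onto the super side. Since $\varrho$ fixes both $v$ and $\pi$, it commutes with the idempotents $\var_+$ and $\var_-$, so the problem splits as $\varrho(\var_{\pm} \mathcal{L}) = \var_{\pm} \mathcal{L}$. Moreover, because $\varrho$ is $\Q(v)^\pi$-linear (in particular does not involve $\bt$) and $\mathcal{L}$ is the $\bt$-degree zero part of the extended lattice $\mathcal{L}[\bt]$, it is equivalent to prove $\varrho(\mathcal{L}[\bt]) = \mathcal{L}[\bt]$ and then descend.

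First I would dispose of the non-super part: $\var_+ \mathcal{L}[\bt] \cong \cA[\bt] \otimes_\cA \mathcal{L}|_{\pi=1}$ is, after base change, the usual Kashiwara crystal lattice inside the classical half quantum group $\chqg|_{\pi=1}$, and the anti-involution $\varrho|_{\pi=1}$ fixing the Chevalley generators preserves it by \cite{Lu1, K}. Hence $\varrho(\var_+ \mathcal{L}[\bt]) = \var_+ \mathcal{L}[\bt]$.

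For the super part, given $x \in \var_- \mathcal{L}[\bt]$, I would use Proposition~\ref{prop:latticeinv} together with the relation $\Psi(\var_{\pm}) = \var_{\mp}$ to write $x = \Psi(y)$ for some $y \in \var_+ \mathcal{L}[\bt]$. Proposition~\ref{prop:rhopsi} then yields $\varrho(\Psi(y)) = \pm\, \Psi(\varrho(y))$, with sign $(-1)^{\mathbf{N}(|y|)/2 + \mathbf{p}(|y|)}$ depending only on the weight. By the non-super case applied to $y$ we have $\varrho(y) \in \var_+ \mathcal{L}[\bt]$, and then Proposition~\ref{prop:latticeinv} places $\Psi(\varrho(y)) \in \var_- \mathcal{L}[\bt]$; since the sign is a unit in $\cA[\bt]^\pi$, we conclude $\varrho(x) \in \var_- \mathcal{L}[\bt]$. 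Using that $\varrho$ is an involution upgrades this inclusion to equality, yielding $\varrho(\mathcal{L}[\bt]) = \mathcal{L}[\bt]$ and therefore $\varrho(\mathcal{L}) = \mathcal{L}$ after projecting to $\bt$-degree zero.

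The main obstacle is conceptual rather than computational: keeping track of the scalar extensions is delicate because $\Psi$ rescales $v$ by $\bt^{-1}$, so the lattice preservation $\Psi(\mathcal{L}[\bt]) = \mathcal{L}[\bt]$ and the intertwining in Proposition~\ref{prop:rhopsi} are genuine statements over $\cA[\bt]^\pi$, and only at the very last step can one descend back to $\cA^\pi$. The sign in Proposition~\ref{prop:rhopsi} is harmless since $\pm 1$ is a unit in $\cA^\pi$, so no positivity input (which would fail in the super case) is required.
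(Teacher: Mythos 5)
Your proof is correct and follows essentially the same strategy as the paper: split via the idempotents $\var_\pm$, cite Kashiwara--Lusztig for the $\pi=1$ part, transport the $\pi=-1$ part through $\Psi$ using Propositions~\ref{prop:rhopsi} and \ref{prop:latticeinv}, and descend from $\cA[\bt]^\pi$ back to $\cA^\pi$ by intersecting with $\chqg$. The only (cosmetic) difference is that you establish $\varrho(\mathcal{L}[\bt])=\mathcal{L}[\bt]$ in full before descending, whereas the paper takes $x\in\mathcal{L}|_{\pi=-1}$ directly and places $\varrho(x)$ in $\mathcal{L}[\bt]|_{\pi=-1}\cap\chqg|_{\pi=-1}=\mathcal{L}|_{\pi=-1}$.
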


\begin{proof}

Since $\frac{1}{2}\in \cA$, we note that

\[\mathcal L=\var_+ \mathcal L\oplus \var_- \mathcal L
\cong \mathcal L|_{\pi=1}\oplus \mathcal L|_{\pi=-1}.\]
We similarly have a decomposition $\varrho=\varrho_+\oplus\varrho_-$
where $\varrho_\pm(x)=\varrho(\var_\pm x)$, and by definition
we see that under the isomorphism
$\var_\pm \schqg\cong \schqg|_{\pi=\pm 1}$, $\varrho_\pm$ corresponds
to $\varrho|_{\pi=\pm 1}$.

Since it is known \cite{K, L93} that $\varrho_{\pi=1}({\mathcal
L}|_{\pi=1})={\mathcal L}|_{\pi=1}$, it suffices to show that
\[
\varrho|_{\pi=-1} ({\mathcal L}|_{\pi=-1})={\mathcal L}|_{\pi=-1}
.\]

Since $\Psi (\pi) =-\pi$, we have $\Psi({\mathcal L}[\bt]|_{\pi=1})={\mathcal L}[\bt]|_{\pi=-1}$.
Let $x\in{\mathcal L}|_{\pi=-1}$.
Since $x\in{\mathcal L}|_{\pi=-1}\subset {\mathcal L}[\bt]|_{\pi=-1}$, by Proposition~\ref{prop:rhopsi} we have
$$
\varrho|_{\pi=-1} (x)
 =(-1)^{\frac{\mathbf{N}(|x|)}{2}+\mathbf{p}(|x|)}
 \Psi\varrho|_{\pi=1} \Psi^{-1}(x)
  \in {\mathcal L}[\bt]|_{\pi=-1}.
$$
On the other hand,
by definition we have $\varrho(x)\in \chqg|_{\pi=-1}$, and hence
$$
\varrho|_{\pi=-1}(x)\in {\mathcal L}[\bt]|_{\pi=-1}\cap \chqg|_{\pi=-1}={\mathcal L}|_{\pi=-1}.
$$
The proposition is proved.
\end{proof}

\subsection{Comparing canonical bases}

The bar involution on $\chqg$ in \eqref{eq:bar} extends trivially to an involution
$\bar{\phantom{c}}$ of $\tpchqg$ and $\schqg$
by letting $\ov{t}=t$ and $\ov{\bt} =\bt$ respectively.

\begin{lem}  \label{lem:bar}
The map $\Psi$ commutes with the bar map on $\schqg$, i.e.,
$\bar{\phantom{c}}\circ\Psi=\Psi\circ\bar{\phantom{c}}$.
\end{lem}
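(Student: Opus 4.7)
The plan is to show both $\ov{\phantom{c}}\circ\Psi$ and $\Psi\circ\ov{\phantom{c}}$ are $\Q$-algebra maps from $\chqg[\bt]$ (with ordinary multiplication) to $(\chqg[\bt],*)$ and to verify they agree on a generating set.

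First I would check that the bar involution is still an algebra homomorphism with respect to the twisted multiplication $*$ on $\schqg$. This is the key compatibility: since $\ov{\theta_i}=\theta_i$ the bar map preserves weights (i.e.\ $|\ov{x}|=|x|$), and since $\ov{\bt}=\bt$, for homogeneous $x,y$ we get
\[
\ov{x*y}=\ov{\bt^{\phi(|x|,|y|)}xy}=\bt^{\phi(|x|,|y|)}\ov{x}\,\ov{y}=\bt^{\phi(|\ov{x}|,|\ov{y}|)}\ov{x}\,\ov{y}=\ov{x}*\ov{y}.
\]
From this and the fact that $\Psi\colon(\chqg[\bt],\cdot)\to(\chqg[\bt],*)$ is an algebra isomorphism (Theorem~\ref{thm:halfiso}), both composites $\ov{\phantom{c}}\circ\Psi$ and $\Psi\circ\ov{\phantom{c}}$ are $\Q$-algebra homomorphisms from $(\chqg[\bt],\cdot)$ to $(\chqg[\bt],*)$.

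Next I would verify the identity on the $\Q$-algebra generating set $\{\theta_i:i\in I\}\cup\{v^{\pm1},\pi,\bt^{\pm1}\}$. The cases of $\theta_i$, $\pi$, and $\bt$ are immediate from the definitions \eqref{eq:bar} and \eqref{eq:psi}. The one nontrivial case is $v$, which is the small computation I would highlight:
\[
\ov{\Psi(v)}=\ov{\bt^{-1}v}=\bt^{-1}\pi v^{-1},\qquad \Psi(\ov{v})=\Psi(\pi v^{-1})=-\pi\,\bt v^{-1}.
\]
Using $\bt^2=-1$, hence $\bt^{-1}=-\bt$, both expressions equal $-\pi\bt v^{-1}$. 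Because the two maps agree on generators and are both algebra homomorphisms into $(\chqg[\bt],*)$, they coincide on all of $\chqg[\bt]$.

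There is no serious obstacle; the only subtle point (and the conceptual content of the lemma) is ensuring that the bar map genuinely respects the $*$-product, which works precisely because $\bt$ is bar-fixed and $\phi$ depends only on weights that are preserved by bar. The verification on $v$ explains why one must extend scalars by $\bt$: the identity $\bt^{-1}=-\bt$ is exactly what absorbs the sign flip $\pi\mapsto-\pi$.
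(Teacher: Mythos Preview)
Your proof is correct and follows essentially the same approach as the paper: reduce to checking the identity on generators, with the only nontrivial case being $v$, where both sides equal $-\pi\bt v^{-1}$. The paper's proof is terser and leaves implicit the verification that both composites are algebra maps into $(\schqg,*)$; your explicit check that the bar involution respects the $*$-product (because $\ov{\bt}=\bt$ and bar preserves weights) is exactly the justification the paper omits, so your argument is a fuller version of the same proof rather than a different one.
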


\begin{proof}
By the definition of $\Psi$ given in Theorem~\ref{thm:halfiso},
the only nontrivial thing to check is the commutativity when acting on $v$. Indeed, recalling $\bt^4=1$, we have
\[
\bar{\Psi(v)}=\bt^{-1}\pi v^{-1}=-\pi(\bt^{-1} v)^{-1}=\Psi(\bar{v}).\]
The lemma is proved.
\end{proof}

As proven in \cite{CHW2} (generalizing the approach of \cite{K}),
there exists a globalization map $G:{\mathcal L}[\bt]/v{\mathcal
L}[\bt]\rightarrow {\mathcal L}[\bt]\cap \bar{{\mathcal L}}[\bt]$
such that for each $b\in \cB$, $G(b)$ is the unique bar-invariant
vector in ${\mathcal L}[\bt]$ such that $G(b)+v{\mathcal L}[\bt]=b$.
The  set $\set{G(b):b\in \cB}$ is  called the canonical $\pi$-basis
for $\chqg$.

Specializing $\pi=1$ yields the usual canonical basis of
Lusztig and Kashiwara, while specializing $\pi=-1$ yields a
(signed) canonical basis for the half quantum supergroup.
Even though we have established a connection
on the level of crystal lattices and crystal bases, it is somewhat surprising to see that
$\Psi$ allows us to establish a direct and precise link between
the canonical bases for the two specializations.
Recall $\ell(\cdot)$ from Proposition~\ref{prop:latticeinv}, which
is integer-valued but may not be even-integer-valued in general.

\begin{thm}\label{thm:CB comparison}
For any $b\in \cB$, we have
\[
\Psi(G(b))=\bt^{\ell(b)}G(b).
\]

In particular, $\Psi(G(b)|_{\pi=1})$ is proportional to $G(b)|_{\pi=-1}$.

\end{thm}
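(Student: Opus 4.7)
The plan is to exploit the uniqueness characterization of the globalization map $G$: for $b \in \cB$, the element $G(b)$ is the \emph{unique} bar-invariant element of ${\mathcal L}[\bt]$ whose image modulo $v{\mathcal L}[\bt]$ equals $b$. So I will produce a candidate element with these properties (up to the scalar $\bt^{\ell(b)}$) and invoke uniqueness.

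First I would apply $\Psi$ to $G(b)$ and verify the three defining conditions. By Lemma~\ref{lem:bar}, $\Psi$ commutes with the bar involution, so $\Psi(G(b))$ is bar-invariant (since $G(b)$ is). By Proposition~\ref{prop:latticeinv}, $\Psi$ preserves ${\mathcal L}[\bt]$, so $\Psi(G(b)) \in {\mathcal L}[\bt]$. Finally, passing to the quotient ${\mathcal L}[\bt]/v{\mathcal L}[\bt]$, the induced map is $\Psi_0$, which by Proposition~\ref{prop:latticeinv} satisfies $\Psi_0(b) = \bt^{\ell(b)} b$; thus
\[
\Psi(G(b)) + v{\mathcal L}[\bt] = \bt^{\ell(b)} b.
\]

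Now I consider the element $y := \bt^{-\ell(b)}\Psi(G(b))$. Since $\bar{\bt} = \bt$, the scalar $\bt^{-\ell(b)}$ is bar-invariant, hence $y$ is bar-invariant. It lies in ${\mathcal L}[\bt]$ (as $\bt^{-\ell(b)} \in \cA[\bt]^\pi$ and the lattice is a module over this ring), and $y + v{\mathcal L}[\bt] = b$. By the uniqueness of the globalization $G(b)$, we conclude $y = G(b)$, i.e.\ $\Psi(G(b)) = \bt^{\ell(b)} G(b)$, which is the first claim.

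For the second claim, recall that $\Psi(\pi) = -\pi$ swaps the idempotents $\var_{\pm}$, so $\Psi$ restricts to a $\Q(\bt)$-linear isomorphism $\schqg|_{\pi=1} \to \schqg|_{\pi=-1}$. Applying the identity $\Psi(G(b)) = \bt^{\ell(b)}G(b)$ and specializing the left-hand side to its component in $\var_{-}\schqg$ (which is $\Psi(G(b)|_{\pi=1})$) yields $\bt^{\ell(b)} G(b)|_{\pi=-1}$, giving the claimed proportionality up to an integer power of $\bt$. There is no substantive obstacle here; the only care needed is to verify that the globalization's uniqueness statement survives the extension of scalars from $\cA^\pi$ to $\cA[\bt]^\pi$, which follows because bar-invariance and the congruence modulo $v{\mathcal L}[\bt]$ continue to pin down $G(b)$ uniquely after tensoring with $\Q(\bt)$.
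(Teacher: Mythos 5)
Your proof is correct and follows essentially the same route as the paper: apply $\Psi$ to $G(b)$, use Lemma~\ref{lem:bar} for bar-invariance, Proposition~\ref{prop:latticeinv} for lattice preservation and the congruence $\Psi(G(b))+v\mathcal{L}[\bt]=\bt^{\ell(b)}b$, and then invoke the uniqueness of the globalization over $\cA[\bt]^\pi$. The paper's proof is a more compressed version of the same argument, concluding with $\Psi(\var_+G(b))=\var_-\bt^{\ell(b)}G(b)$ exactly as you do via the idempotent swap.
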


\begin{proof}
It follows by Lemma~\ref{lem:bar} that $\Psi(G(b))$ is bar-invariant.
It follows by the definition of the maps and Proposition~ \ref{prop:latticeinv} that
\[\Psi(G(b))+ v{\mathcal L}[\bt] =\Psi(b) =\bt^{\ell(b)} b.\]
Therefore, $\bt^{-\ell(b)}\Psi(G(b))=G(b)$ and thus
$\Psi(\var_+G(b))=\var_{-}\bt^{\ell(b)}G(b)$.
\end{proof}


\begin{example}
Let $(I,\cdot)$ be the super Cartan datum associated to $\mathfrak{osp}(1|4)$
with $I=\set{\mathbf{1},\mathbf{2}}$ (where $\mathbf{1}$ is the odd simple root) and Dynkin diagram given by
$$\qquad\qquad\qquad
\xy
(-25,0)*{\bullet};(-15,0)*{\circ}**\dir{=};
(-20,0)*{<};
(-25,3)*{\mathbf{1}};(-15,3)*{\mathbf{2}}
\endxy
$$
Then
\[p(\mathbf{1})=1,\quad p(\mathbf{2})=0;\]
\[\mathbf{1}\cdot \mathbf{1}=2,\quad \mathbf{1}\cdot \mathbf{2}
=\mathbf{2}\cdot \mathbf{1}=-2,\quad \mathbf{2}\cdot \mathbf{2}=4;\]
\[\phi(\mathbf{1},\mathbf{1})=1,\quad \phi(\mathbf{1},\mathbf{2})= 0,
\quad \phi(\mathbf{2},\mathbf{1})=-2,\quad
\phi(\mathbf{2},\mathbf{2})=2.\]

It is an easy computation that
\[\tf_{\mathbf{1}}\tf_{\mathbf{2}}\tf_{\mathbf{1}} 1
=\theta_{\mathbf{1}}(\theta_{\mathbf{2}}\theta_{\mathbf{1}}-v^2\theta_{\mathbf{1}}\theta_{\mathbf{2}})+
v^2\theta^{(2)}_{\mathbf{1}}\theta_{\mathbf{2}}.\]
In particular, $G(\tf_{\mathbf{1}}\tf_{\mathbf{2}}\tf_{\mathbf{1}} 1+v{\mathcal L})=
\theta_{\mathbf{1}}\theta_{\mathbf{2}}\theta_{\mathbf{1}}$,
and $\Psi \big(G(\tf_{\mathbf{1}}\tf_{\mathbf{2}}\tf_{\mathbf{1}} 1+v{\mathcal L}) \big)
=\bt^{-1}\theta_{\mathbf{1}}\theta_{\mathbf{2}}\theta_{\mathbf{1}}$.
\end{example}

\section{The twistor of modified covering quantum group}
 \label{sec:modified}


\subsection{The modified covering quantum group}

To facilitate the definition of modified covering quantum group
next, we recall the definition of the covering quantum group $\cqg$.
We recall that $b_{ij} = 1- a_{ij}.$

\begin{dfn} \cite{CHW1}
 \label{dfn:cqg}
The covering quantum group
$\cqg$ associated to a super root datum $(Y, X, I, \cdot)$ is the $\Q(v)^{\pi}$-algebra with generators
$E_i, F_i$, $K_\mu$, and $J_\mu$, for  $i\in I$ and $\mu\in Y$, subject to the
relations:
\begin{equation}\label{eq:JKrels}
J_\mu J_\nu=J_{\mu+\nu},\quad K_\mu K_\nu=K_{\mu+\nu},\quad K_0=J_0=J_\nu^2=1,\quad
J_\mu K_\nu=K_\nu J_\mu,
\end{equation}
\begin{equation}\label{eq:Jweightrels}
J_\mu E_i=\pi^{\ang{\mu,i'}} E_i J_\mu,\quad J_\mu F_i=\pi^{-\ang{\mu,i'}} F_i J_\mu,
\end{equation}
\begin{equation}\label{eq:Kweightrels}
K_\mu E_i=v^{\ang{\mu,i'}} E_i K_\mu,\quad K_\mu F_i=v^{-\ang{\mu,i'}} F_i K_\mu,
\end{equation}
\begin{equation}\label{eq:commutatorrelation}
E_iF_j-\pi^{p(i)p(j)}F_jE_i=\delta_{ij}\frac{J_{d_i i}K_{d_i i}-K_{-d_i i}}{\pi_i v_i- v_i^{-1}},
\end{equation}
\begin{equation}\label{eq:Eserrerel}
\sum_{k=0}^{b_{ij}} (-1)^k\pi^{\binom{k}{2}p(i)+kp(i)p(j)}\bbinom{b_{ij}}{k}_{v_i,\pi_i}
 E_i^{b_{ij}-k}E_jE_i^k=0 \;\; (i\neq j),
\end{equation}
\begin{equation}\label{eq:Fserrerel}
\sum_{k=0}^{b_{ij}} (-1)^k\pi^{\binom{k}{2}p(i)+kp(i)p(j)}\bbinom{b_{ij}}{k}_{v_i,\pi_i}
 F_i^{b_{ij}-k}F_jF_i^k=0 \;\; (i\neq j),
\end{equation}
for $i,j\in I$ and $\mu,\nu\in Y$.
\end{dfn}

Again by specialization at $\pi=\pm 1$, we obtain the usual quantum
group $\cqg|_{\pi=1}$ (with extra central elements) and the super
quantum group $\cqg|_{\pi=-1}$.
We extend scalars and set $\tpcqg=\Q(v)[t^{\pm 1}]^\pi\otimes_{\Q(v)^\pi} \cqg$.

We endow $\cqg$ with a $\Z[I]$-grading by setting
\begin{equation}
|E_i|=i,\quad |F_i|=-i,\quad |J_\mu|=|K_\mu|=0,
\end{equation}
and also endow $\cqg$ with a $\Z_2$-grading by setting
\begin{equation}
p(E_i)=p(F_i)=p(i),\quad p(J_\mu)=p(K_\mu)=0.
\end{equation}

The definition of the covering quantum group $\cqg$ is also internally coherent
with the notion of the modified covering quantum group $\cmqg$, which we now introduce.

\begin{dfn}
The {\it modified covering quantum group} $\cmqg$
associated to the root datum $(Y, X, I, \cdot)$ is defined to be
the associative  $\Qvp$-algebra without unit which is
generated by the symbols $1_{\lambda}, E_i1_{\lambda}$ and $F_i1_{\lambda}$, for $\lambda\in X$
and $i \in I$, subject to the relations:
\begin{eqnarray}
 &1_{\lambda} 1_{\lambda'} =\delta_{\lambda, \lambda'} 1_{\lambda},
  \vspace{6pt}\label{eq:modified idemp rel}\\
&   (E_i1_{ \lambda })  1_{\lambda'} =  \delta_{\lambda, \lambda'} E_i1_{ \lambda}, \quad
1_{\lambda'}   (E_i1_{\lambda}) = \delta_{\lambda', \lambda+ i'} E_i1_{\lambda},
     \vspace{6pt}\label{eq:modified E rel}\\
& (F_i1_{ \lambda})  1_{\lambda'}=  \delta_{\lambda, \lambda'} F_i1_{ \lambda}, \quad
1_{\lambda'}  (F_i1_{\lambda})=  \delta_{\lambda', \lambda- i'} F_i1_{\lambda},
\vspace{6pt}\label{eq:modified F rel}\\
 & (E_iF_j-\pi^{p(i)p(j)}F_jE_i)1_{\lambda}=\delta_{ij}\bra{\langle i, \lambda\rangle}_{v_i,\pi_i}1_\lambda,
 \vspace{6pt}
 \label{eq:modified comm rel}\\
 & \sum_{k=0}^{b_{ij}}(-1)^k \pi^{\binom{k}{2}p(i)+kp(i)p(j)}\bbinom{b_{ij}}{k}_{v_i,\pi_i}
 E^{b_{ij}-k}_iE_jE^{k}_i1_{\lambda}=0
 \;\; (i\neq j), 
 \vspace{6pt}
   \label{eq:modified E Serre}\\
 &\sum_{k=0}^{b_{ij}}(-1)^k \pi^{\binom{k}{2}p(i)+kp(i)p(j)}\bbinom{b_{ij}}{k}_{v_i,\pi_i}
  F^{b_{ij}-k}_iF_jF^{k}_i1_{\lambda}=0
  \;\;  (i\neq j), 
   \label{eq:modified F Serre}
\end{eqnarray}
where $i,j \in I$, $\la, \la'\in X$, and we use the notation $xy1_\lambda=(x1_{\lambda+|y|})(y1_\lambda)$
for $x,y\in \cqg$.
\end{dfn}

As with the half covering quantum groups, if we set $\pi=1$ then $\cmqg|_{\pi=1}$
is the modified quantum group of Lusztig, whereas if $\pi=-1$ then $\cmqg|_{\pi=-1}$
is the modified quantum supergroup.

The algebra $\cmqg$ has a (left) $\cqg$-action given by
\[
E_i\cdot x1_\lambda=(E_i1_{\lambda+|x|})x1_\lambda,\quad F_i
 \cdot x1_\lambda=(F_i1_{\lambda+|x|})x1_\lambda,
 \]
\[
K_\nu\cdot x1_\lambda=v^{\ang{\nu,\lambda+|x|}} x1_\lambda,
 \quad J_{\nu}\cdot x1_\lambda=\pi^{\ang{\nu,\lambda+|x|}}x1_{\lambda}.
 \]
 There is also a similar right $\cqg$-action on $\cmqg$.
Then as in \cite{Lu1},   $\cmqg$ can be identified with the $\Qvp$-algebra on the symbols $x1_\lambda$
for $x\in \cqg$ and $\lambda\in X$ satisfying
\begin{equation}\label{eq:altmodifiedrel}
x1_\lambda y1_\mu=\delta_{\lambda,\mu+|y|}xy1_\mu,\quad
K_\nu1_\lambda=v^{\ang{\nu,\lambda}} 1_\lambda,\quad J_{\nu}1_\lambda
=\pi^{\ang{\nu,\lambda}}1_{\lambda}.
\end{equation}

Denote by $_{\Z} \!\cmqg$ the $\Z[v^{\pm 1}]^\pi$-subalgebra of $\cmqg$ generated by
$1_{\lambda}, E_i^{(n)} 1_{\lambda}$ and $F_i^{(n)}1_{\lambda}$, for $n \ge 1$, $\lambda\in X$
and $i \in I$ (here we recall the definition of divided powers \eqref{eq:thetadivpow}).
Then  $_{\Z} \!\cmqg$ is a $\Z[v^{\pm 1}]^\pi$-form of $\cmqg$.

\subsection{The twistor $\dot{\Psi}$}

Recall the bilinear form $\phi(\cdot,\cdot):\Z[I]\times
\Z[I]\rightarrow \Z$ from \eqref{eq:phidef}, and that we have an
embedding $\Z[I]\hookrightarrow X$ given by $i\mapsto i'$. Fix once
and for all a  transversal $C \subset X$ for the coset
representatives of $X/\Z[I]$. Then we define the bilinear pairing
$\dot\phi(\cdot,\cdot):\Z[I]\times X\rightarrow \Z$ by
\begin{equation}
\dot{\phi}(\nu,\mu'+\lambda)=\phi(\nu,\mu),
\quad \ \text{for all }\nu,\mu\in \Z[I], \; \lambda\in C.
\end{equation}

The map $\dot\Psi$ in the following theorem can be viewed as a counterpart in the setting of
modified covering quantum group of  the isomorphism $\Psi$
in Theorem~ \ref{thm:halfiso}. Note that
we do not need to use a twisted multiplication in this setting as for $\Psi$.
By base changes we set as usual $\cmqg[\bt] = \Q(v,\bt)^\pi \otimes_\Qvp \cmqg$ and
$_{\Z} \!\cmqg[\bt] =\Z[v^{\pm 1}, \bt]^\pi \otimes_{\Z[v^{\pm 1}]^\pi} {}_{\Z} \!\cmqg$.

\begin{thm}\label{thm:modauto}
\begin{enumerate}
\item
There is an automorphism
$\dot{\Psi}$ of the $\Q(\bt)$-algebra $\cmqg[\bt]$ of order $4$
such that, for all $i\in I$ and $\la \in X$,
\[
\dot\Psi(1_\lambda)=1_\lambda,\quad \dot\Psi(E_i1_\lambda)
=\bt^{d_i\ang{i,\lambda}-\dot{\phi}(i,\lambda)} E_i1_{\lambda},\quad
\dot\Psi(F_i1_\lambda)=\bt^{\dot{\phi}(i,\lambda)} F_i1_{\lambda},
\]
\[
\dot\Psi(\pi)=-\pi,\quad \dot\Psi(v)=\bt^{-1}v.\]

\item The automorphism $\dot{\Psi}$ preserves the $\Z[v^{\pm 1},\bt]^\pi$-form $_{\Z} \!\cmqg[\bt]$.
\end{enumerate}
\end{thm}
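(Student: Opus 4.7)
The plan is to define $\dot\Psi$ on the generators by the prescribed formulas and then verify that each defining relation \eqref{eq:modified idemp rel}--\eqref{eq:modified F Serre} is preserved; finally confirm the order and integrality assertions. The strategy parallels Theorem~\ref{thm:halfiso}: the exponents $\dot\phi(i,\lambda)$ and $d_i\ang{i,\lambda}-\dot\phi(i,\lambda)$ are engineered so that, when idempotents are moved past the $F_i$'s and $E_i$'s, the accumulated powers of $\bt$ reproduce exactly the bilinear form $\phi$ that drives the twisted multiplication $*$ of Section~\ref{subsec:twist}.

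The relations \eqref{eq:modified idemp rel}--\eqref{eq:modified F rel} hold automatically, since $\dot\Psi$ scales each generator by a weight-dependent unit in $\bt^{\Z}$ and preserves weights. For the commutator relation \eqref{eq:modified comm rel}, the key identities $\dot\phi(i,\lambda\pm j')=\dot\phi(i,\lambda)\pm\phi(i,j)$ and $d_i\ang{i,j'}=i\cdot j$ allow one to compute $\dot\Psi(E_iF_j 1_\lambda)$ and $\dot\Psi(F_jE_i 1_\lambda)$ directly; their scalar factors differ by $\bt^{-i\cdot j+\phi(i,j)-\phi(j,i)}$, which by \eqref{eq:phisymmetrized} is congruent to $\bt^{2p(i)p(j)}=(-1)^{p(i)p(j)}\bmod 4$, and this sign exactly absorbs the change $\pi^{p(i)p(j)}\mapsto(-\pi)^{p(i)p(j)}$. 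When $i=j$, matching the right-hand side further uses $\dot\Psi(\bra{n}_{v_i,\pi_i})=\bt^{d_i(n-1)}\bra{n}_{v_i,\pi_i}$ (as in the proof of Theorem~\ref{thm:halfiso}) together with the bar-consistent identity $\bt^{2d_i}=(-1)^{p(i)}$.

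The main obstacle will be verifying the Serre relations \eqref{eq:modified E Serre} and \eqref{eq:modified F Serre}. My plan is to reduce them to the twisted Serre relation \eqref{eq:Serre*} specialized at $t=\bt$. Writing $F_i^{b_{ij}-k}F_jF_i^k 1_\lambda$ as an iterated product and collecting all the scalars arising from pushing idempotents to the right should yield
\begin{equation*}
\dot\Psi(F_i^{b_{ij}-k}F_jF_i^k 1_\lambda)=\bt^{A}\,\bt^{-(b_{ij}-k)\phi(i,j)-k\phi(j,i)}F_i^{b_{ij}-k}F_jF_i^k 1_\lambda,
\end{equation*}
where $A$ depends only on $\lambda,i,j,b_{ij}$ and not on $k$. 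Combining this with $\dot\Psi\bbinom{b_{ij}}{k}_{v_i,\pi_i}=\bt^{k(b_{ij}-k)d_i}\bbinom{b_{ij}}{k}_{v_i,\pi_i}$, the resulting $k$-sum becomes precisely \eqref{eq:Serre*} evaluated at $t=\bt$, scaled by the common factor $\bt^{A-b_{ij}\phi(i,j)}$, and therefore vanishes by Theorem~\ref{thm:halfiso}. The $E$-Serre relation is treated symmetrically, with the roles of the two factors in the exponent interchanged. The delicate point throughout is the parity-dependent bookkeeping mod~$4$, exactly as in the proof of Theorem~\ref{thm:halfiso}.

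For the order and invertibility, since $\dot\Psi$ is $\Q(\bt)$-linear, acts on each $1_\lambda$, $E_i 1_\lambda$, $F_i 1_\lambda$ by a scalar in $\bt^{\Z}$, and satisfies $\dot\Psi^{4}(v)=v$ and $\dot\Psi^{4}(\pi)=\pi$, four iterations accumulate $\bt^{4(\cdot)}=1$ on every generator, so $\dot\Psi^{4}=\mathrm{id}$; the order is exactly $4$ because $\dot\Psi^{2}(v)=-v\neq v$. For part~(2), the identity $\dot\Psi(\bra{n}^{!}_{v_i,\pi_i})=\bt^{\binom{n}{2}d_i}\bra{n}^{!}_{v_i,\pi_i}$ implies that $\dot\Psi(E_i^{(n)}1_\lambda)$ and $\dot\Psi(F_i^{(n)}1_\lambda)$ are integer powers of $\bt$ times themselves; since these scalars lie in $\Z[\bt]^{\pi}$, the integral form $_\Z\cmqg[\bt]$ is preserved.
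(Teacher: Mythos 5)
Your proposal is correct and follows essentially the same route as the paper: verify relations on generators, reduce the Serre verification to the exponent computation of Theorem~\ref{thm:halfiso} via the mod-$4$ arithmetic driven by $\phi$, and deduce the order-$4$ and integrality statements from the scalar factors on generators and divided powers. The only imprecision is the assertion that the $k$-sum literally equals \eqref{eq:Serre*} at $t=\bt$; strictly speaking, your computed $k$-dependent $\bt$-exponent $k(b_{ij}-k)d_i-(b_{ij}-k)\phi(i,j)-k\phi(j,i)$ (together with the sign $(-1)^{\binom{k}{2}p(i)+kp(i)p(j)}$ coming from $(-\pi)^{\cdots}$) must still be shown to be $k$-independent modulo $4$ — using that $\phi(i_1,i_2)\in 2\Z$, $2d_i\equiv 2p(i)\pmod 4$, and the congruence $\spadesuit_{ij}(k)+k(b_{ij}-k)d_i\equiv\clubsuit\pmod 4$ — which is exactly the bookkeeping you flag as delicate, and which the paper's proof carries out via $\spadesuit_{ij}$, $\heartsuit_{ij}$ and $c(i,j)$.
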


\begin{proof}
(1) Once we verify that the endomorphism $\dot\Psi$ is well defined,
it is clearly an automorphism of order four by checking the images
of the generators. To verify that $\dot\Psi$ is well defined, it
suffices to check that the images of the generators satisfy the
relations. It is straightforward to verify \eqref{eq:modified idemp
rel}-\eqref{eq:modified F rel}, and we leave that as an exercise to
the reader.

Let us check \eqref{eq:modified comm rel}. We compute
\begin{align*}
(& \bt^{d_i\ang{i,\lambda-j'} -\dot{\phi}(i,\lambda-j')} E_i1_{\lambda-j'})(\bt^{\dot{\phi}(j,\lambda)} F_j1_{\lambda})
 \\
 &\; - (-\pi)^{p(i)p(j)}  (\bt^{\dot{\phi}(j,\lambda+i')} F_j1_{\lambda+i'})
(\bt^{d_i\ang{i,\lambda}-\dot{\phi}(i,\lambda)}E_i1_{\lambda})\\
&\quad =\bt^{d_i\ang{i,\lambda-j'}-\dot{\phi}(i,\lambda-j')+\dot{\phi}(j,\lambda)}
(E_iF_j-(-\pi)^{p(i)p(j)}\bt^{i\cdot j + \phi(j,i)-\phi(i,j)}F_jE_i)1_\lambda
\\
&\quad =\bt^{d_i\ang{i,\lambda-j'}-\dot{\phi}(i,\lambda-j')+\dot{\phi}(j,\lambda)}
(E_iF_j-\pi^{p(i)p(j)}F_jE_i)1_\lambda\\
&\quad =\delta_{ij}\bt_i^{\ang{i,\lambda}-1}\bra{\ang{i,\lambda}}_{v_i,\pi_i}\\
&\quad =\delta_{ij}\bra{\ang{i,\lambda}}_{\bt_i^{-1}v_i,-\pi_i}.
\end{align*}

Next, let us check the Serre relations. As the proof of \eqref{eq:modified F Serre} are
similar, we will only check \eqref{eq:modified E Serre}.
Let us set
\[E_{ij}(k)=\dot\Psi(E_i^{b_{ij}-k}E_jE_i^k1_\lambda).\]
We want to verify that
\begin{equation}\label{eq:dotpsimodifiedserre}
\sum_{k=0}^{b_{ij}}(-1)^k (-\pi)^{\binom{k}{2}p(i)
 +kp(i)p(j)}\bbinom{b_{ij}}{k}_{\bt^{-1}v_i,-\pi_i}E_{ij}(k)=0.
\end{equation}

First note that
\[
\Psi(E_i^s1_\lambda)
=\prod_{t=1}^s\bt^{d_i\ang{i,\lambda+(t-1)i'}-\dot{\phi}(i,\lambda+(t-1)i')}E_i1_{\lambda+(t-1)i'}
=\bt^{\binom{s}{2}d_i+s(d_i\ang{i,\lambda}-\dot{\phi}(i,\lambda))}E_i^s1_\lambda.
\]
By using the factorization
$E_i^{b_{ij}-k}E_jE_i^k1_\lambda
=E_i^{b_{ij}-k}1_{\lambda+j'+ki'} E_j1_{\lambda+ki'}E_i^k1_\lambda$
and the identity $\binom{k}{2}+\binom{b_{ij}-k}{2}+k(b_{ij}-k)=\binom{b_{ij}}{2}$, we compute that
\[E_{ij}(k)=\bt^{\spadesuit_{ij}(k)+\heartsuit_{ij}}E^{b_{ij}-k}_iE_jE^{k}_i1_{\lambda},\]
where
\begin{align*}
\heartsuit_{ij}&=b_{ij}(i\cdot j + d_i\ang{i,\lambda}-\dot{\phi}(i,\lambda))
+d_j\ang{j,\lambda}-\dot{\phi}(j,\lambda)+\binom{b_{ij}}{2},
  \\
\spadesuit_{ij}(k)&=-k\phi(j,i)-(b_{ij}-k)\phi(i,j).
\end{align*}

Then
\begin{align*}
\bbinom{b_{ij}}{k}_{\bt^{-1}v_i,-\pi_i}E_{ij}(k)
=\bt^{\heartsuit_{ij}+\spadesuit_{ij}(k)
+k(b_{ij}-k)d_i}\bbinom{b_{ij}}{k}_{v_i,\pi_i}E_i^{b_{ij}-k}E_jE_i^k1_{\lambda}.
\end{align*}

Recall $\clubsuit$ from \eqref{eq:club}.
Since $\phi(i_1,i_2)\in 2\Z$ for $i_1\neq i_2\in I$, we see that
\[
\spadesuit_{ij}(k)+k(b_{ij}-k)d_i
\equiv \clubsuit
\equiv 2\binom{k}{2} + 2k p(i) p(j)+c(i,j)\mod 4.
\]
Then we see that
\begin{align*}
\sum_{k=0}^{b_{ij}}&(-1)^k (-\pi)^{\binom{k}{2}p(i)+kp(i)p(j)}\bbinom{b_{ij}}{k}_{\bt^{-1}v_i,-\pi_i}E_{ij}(k)
\\&=\bt^{\heartsuit_{ij}+c(i,j)}\sum_{k=0}^{b_{ij}}(-1)^k \pi^{\binom{k}{2}p(i)+kp(i)p(j)}
\bbinom{b_{ij}}{k}_{v_i,\pi_i}
 E^{b_{ij}-k}_iE_jE^{k}_i1_{\lambda}=0.
\end{align*}
This finishes the verification of the Serre relations, whence (1).

Part (2) follows immediately from (1) by noting that $\dot\Psi$ preserves the divided powers up to
some integer power of $\bt$.
\end{proof}

Since  $\dot\Psi(\pi) = -\pi$, we obtain the following variant of Theorem~ \ref{thm:modauto}.

\begin{thm}
\label{thm-Udot}
The automorphism $\dot{\Psi}$ of $\cmqg[\bt]$
induces an isomorphism of $\Q(\bt)$-algebras $\cmqg[\bt]|_{\pi=1} \cong \cmqg[\bt]|_{\pi=-1}$
and an isomorphism of $\Z[\bt]$-algebras ${}_\Z\cmqg[\bt]|_{\pi=1}\cong {}_\Z\cmqg[\bt]|_{\pi=-1}$.
In particular,  we have embeddings
$\cmqg|_{\pi=\pm 1}\hookrightarrow \cmqg[\bt]|_{\pi=\mp 1}$ and
${}_\Z\cmqg|_{\pi=\pm 1}\hookrightarrow {}_\Z\cmqg[\bt]|_{\pi=\mp 1}$.
\end{thm}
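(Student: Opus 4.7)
The plan is to deduce Theorem~\ref{thm-Udot} as a direct consequence of Theorem~\ref{thm:modauto} by exploiting the fact that $\dot\Psi(\pi) = -\pi$. The central observation is that, since $\tfrac12 \in \Q(v,\bt)$, the orthogonal idempotents $\var_{\pm} = \tfrac{1 \pm \pi}{2}$ from \eqref{eq:pi idempotent} give a decomposition $\cmqg[\bt] = \var_{+}\cmqg[\bt] \oplus \var_{-}\cmqg[\bt]$, and by the general setup in the introduction we have $\cmqg[\bt]|_{\pi = \pm 1} \cong \var_{\pm}\cmqg[\bt]$ as $\Q(\bt)$-algebras. Because $\dot\Psi$ is a $\Q(\bt)$-algebra automorphism sending $\pi \mapsto -\pi$, it maps $\var_{\pm} \mapsto \var_{\mp}$, so it restricts to a $\Q(\bt)$-algebra isomorphism $\var_{+}\cmqg[\bt] \stackrel{\sim}\to \var_{-}\cmqg[\bt]$, which under the above identifications is exactly the desired isomorphism $\cmqg[\bt]|_{\pi=1} \cong \cmqg[\bt]|_{\pi=-1}$.

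For the integral statement, I would run the identical argument on the $\Z[v^{\pm 1},\bt]^\pi$-form. Here I rely on Theorem~\ref{thm:modauto}(2), which guarantees that $\dot\Psi$ preserves ${}_\Z\cmqg[\bt]$, and I use that $\tfrac12$ is not needed: since $\bt$ is assumed present in the base ring and $\dot\Psi$ only involves powers of $\bt$, the relevant idempotent decomposition $_{\Z}\cmqg[\bt] = \var_{+}\,{}_\Z\cmqg[\bt] \oplus \var_{-}\,{}_\Z\cmqg[\bt]$ still works over $\Z[v^{\pm 1},\bt]^\pi$ once we extend scalars to include $\tfrac12$, or alternatively we note that $\dot\Psi$ restricts to a $\Z[\bt]$-algebra automorphism of ${}_\Z\cmqg[\bt]$ that interchanges the $\pi=1$ and $\pi=-1$ specializations by the same idempotent swap.

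Finally, to produce the embeddings $\cmqg|_{\pi=\pm 1} \hookrightarrow \cmqg[\bt]|_{\pi=\mp 1}$, I compose the natural base-change inclusion $\cmqg|_{\pi=\pm 1} \hookrightarrow \cmqg[\bt]|_{\pi=\pm 1}$ (which is injective because $\Q(v,\bt)$ is faithfully flat over $\Q(v)$) with the isomorphism $\cmqg[\bt]|_{\pi=\pm 1} \cong \cmqg[\bt]|_{\pi=\mp 1}$ just constructed; the integral version is obtained the same way using faithful flatness of $\Z[v^{\pm 1},\bt]$ over $\Z[v^{\pm 1}]$.

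I do not anticipate any serious obstacle, since all technical content — well-definedness of $\dot\Psi$, preservation of the integral form, and the fact that $\dot\Psi(\pi) = -\pi$ — is already established in Theorem~\ref{thm:modauto}. The only minor point to verify is that $\dot\Psi$, being $\Q(\bt)$-linear rather than $\Q(v,\bt)$-linear (note $\dot\Psi(v) = \bt^{-1} v$), still defines a legitimate $\Q(\bt)$-algebra isomorphism between the two specializations, which is immediate from the construction.
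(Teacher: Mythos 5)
Your proposal is correct and follows essentially the same reasoning the paper uses: the paper introduces Theorem~\ref{thm-Udot} simply with the remark ``Since $\dot\Psi(\pi)=-\pi$, we obtain the following variant of Theorem~\ref{thm:modauto},'' and your idempotent-swap argument is exactly the expansion of that remark (mirroring the derivation of Corollary~\ref{cor:half super to non} from Theorem~\ref{thm:halfiso}). One small tidying note on the integral form: since $\tfrac12\notin\Z[\bt]$ the idempotents $\var_\pm$ are unavailable there, but the argument is cleaner anyway if you avoid them altogether --- specialization is the quotient by $(\pi\mp 1)$, and $\dot\Psi(\pi-1)=-(\pi+1)$ shows directly that $\dot\Psi$ carries $(\pi-1)\,{}_\Z\cmqg[\bt]$ onto $(\pi+1)\,{}_\Z\cmqg[\bt]$ and so descends to the required $\Z[\bt]$-algebra isomorphism of specializations.
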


Hence, one may view the algebras
$\cmqg|_{\pi= 1}$ and $\cmqg|_{\pi=-1}$ as two  different rational forms
of the algebra $\cmqg[\bt]|_{\pi=1}$ (or equivalently, of $\cmqg[\bt]|_{\pi=-1}$).
We shall refer to the automorphism $\dot\Psi$  in Theorem~ \ref{thm:modauto}
as a {\em twistor} on $\cmqg[\bt]$.

\begin{rem}
\begin{enumerate}
\item The definition of the covering quantum group $\cqg$ and the
modified covering quantum group $\cmqg$
 makes sense for super Cartan datum
 without the bar-consistent condition (d) in Definition~\ref{dfn:scd}.
 But the above theorems require the bar-consistent condition.

\item
The integer $\dot{\phi}(i,\lambda)$ admits a geometric interpretation
(compare the integers $e_{\mu, n\alpha_i} $ and $f_{\mu, n\alpha_i}$ in ~\cite[5.1]{Li12}).

%
\end{enumerate}
\end{rem}

\subsection{Category equivalences}

Recall that a $\cmqg$-module $M$ over $\Qv$ is called {\em unital}
if each $m\in M$ is a finite sum of the form $m=\sum_{\lambda\in
X}1_\lambda m$.
When specializing $\pi$ to $\pm 1$, we obtain the definition of a
unital $\cmqg|_{\pi=\pm 1}$-module  over  $\Qv$. We denote the
categories of unital modules over  $\Qv$ of $\cmqg$ (resp.,
$\cmqg|_{\pi=1}$, $\cmqg|_{\pi=- 1}$) by $\dot\bC$ (and resp.,
$\dot\bC_{\pi=1}$, $\dot\bC_{\pi=- 1}$). We have $\dot\bC
=\dot\bC_{\pi=1} \oplus \dot\bC_{\pi=- 1}$. We denote the category
of unital modules  over the field $\Q(v,\bt)$ of $\cmqg$  (and
resp., $\cmqg|_{\pi=1}$, $\cmqg|_{\pi=- 1}$) by $\dot\bC^\bt$ (and
resp., $\dot\bC^\bt_{\pi=1}$, $\dot\bC^\bt_{\pi=- 1}$).

The following  is an immediate consequence
of Theorem \ref{thm-Udot}.

\begin{prop}\label{prop:dotcatequiv}
The twistor $\dot\Psi$ induces a category equivalence  between
$\dot\bC^\bt _{\pi=1}$ and $\dot\bC^\bt _{\pi=-1}$.
\end{prop}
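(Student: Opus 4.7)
The plan is to deduce the equivalence directly from the algebra isomorphism $\dot\Psi \colon \cmqg[\bt]|_{\pi=1} \xrightarrow{\sim} \cmqg[\bt]|_{\pi=-1}$ provided by Theorem~\ref{thm-Udot}, by the usual pull-back construction. Concretely, given any $M$ in $\dot\bC^\bt_{\pi=-1}$ with structure map $\rho_M \colon \cmqg[\bt]|_{\pi=-1} \to \End_{\Q(v,\bt)}(M)$, I would define $\dot\Psi^*(M)$ to be the same underlying $\Q(v,\bt)$-vector space equipped with the action $\rho_M \circ \dot\Psi$. Morphisms in $\dot\bC^\bt_{\pi=-1}$ are carried to morphisms by the identity on underlying maps (they remain $\Q(v,\bt)$-linear and intertwine the new actions because $\dot\Psi$ is used uniformly). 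This yields a $\Q(v,\bt)$-linear functor $\dot\Psi^* \colon \dot\bC^\bt_{\pi=-1} \to \dot\bC^\bt_{\pi=1}$.

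Next I would verify that $\dot\Psi^*$ lands in the unital subcategory. Because $\dot\Psi(1_\lambda) = 1_\lambda$ for every $\lambda \in X$ (Theorem~\ref{thm:modauto}(1)), the projector $1_\lambda$ acts on $\dot\Psi^*(M)$ exactly as it acts on $M$. Hence for any $m \in M$ the decomposition $m = \sum_{\lambda \in X} 1_\lambda m$ in $M$ is simultaneously the unital decomposition of $m$ in $\dot\Psi^*(M)$, showing unitality is preserved. The analogous functor built from $\dot\Psi^{-1}$ goes the other way and lands in $\dot\bC^\bt_{\pi=-1}$ for the same reason.

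Finally, the composites $(\dot\Psi^{-1})^* \circ \dot\Psi^*$ and $\dot\Psi^* \circ (\dot\Psi^{-1})^*$ are the identity functors on the nose (not merely up to natural isomorphism), because on objects they recover the original action and on morphisms they act as the identity on underlying linear maps. Therefore $\dot\Psi^*$ is an equivalence (in fact, an isomorphism) of categories between $\dot\bC^\bt_{\pi=-1}$ and $\dot\bC^\bt_{\pi=1}$, which is the assertion of the proposition.

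There is essentially no obstacle: the nontrivial content has already been packaged into Theorem~\ref{thm-Udot}, and the only point requiring any care is confirming that the idempotents $1_\lambda$ are fixed by $\dot\Psi$ so that unitality is transported. Consequently the proof reduces to a short remark rather than a calculation.
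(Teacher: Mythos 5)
Your argument is correct and is precisely what the paper has in mind: the paper states the proposition as "an immediate consequence of Theorem~\ref{thm-Udot}" without writing out the pull-back construction, and you have simply filled in that construction together with the one point worth recording, namely that $\dot\Psi(1_\lambda)=1_\lambda$ so unitality is transported. One small imprecision: since $\dot\Psi(v)=\bt^{-1}v$, the $\Q(v,\bt)$-module structure on $\dot\Psi^*(M)$ is not literally the same as that on $M$ but is twisted by the field automorphism $v\mapsto \bt^{-1}v$ of $\Q(v,\bt)$ over $\Q(\bt)$; only the underlying $\Q(\bt)$-vector space is unchanged. This does not affect the conclusion, since pull-back along a ring isomorphism is always an isomorphism of module categories, but the phrase "they remain $\Q(v,\bt)$-linear" should be read with this twist in mind.
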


The main novelty in the definition
of the covering quantum group $\cqg$ is the additional generators $J_\mu$ in the Cartan subalgebra,
which lead to a natural formulation of the integral form of $\cqg$ and  weight modules of $\cqg$
(see \cite{CHW1}).
A $\cqg$-module over $\Qv$ (and resp., 
$\cqg[\bt]$-module over $\Q(v,\bt)$)
$M$ is called a {\em weight module} of $\cqg$ (and resp., $\cqg[\bt]$) if
$M=\oplus_{\lambda\in X} M_\lambda$ with
\[
M_\lambda=\set{
m\in M \mid  K_\nu m=v^{\ang{\nu,\lambda}} m,\ J_\nu m=\pi^{\ang{\nu,\lambda}} m,
\ \ \forall\ \nu\in Y}.
\]
We denote the category of weight modules of $\cqg$ (and resp.,
$\cqg[\bt]$) over the respective fields as above by $\bC$ (and
resp.,  $\bC^\bt$). Similarly, we have categories of weight modules
of $\cqg[\bt]|_{\pi=\pm 1}$ over $\Q(v,\bt)$ denoted by
$\bC^\bt_{\pi =\pm 1}$.

One can suitably formulate the BGG category $\mathcal O^\bt$,
$\mathcal O^\bt_{\pi=1}$, $\mathcal O^\bt_{\pi=-1}$
as subcategories of $\bC^\bt$, $\bC^\bt_{\pi=1}$ and $\bC^\bt_{\pi=-1}$, respectively.

Recall the definition of the highest weight $\cqg$-modules $V(\lambda)$ over $\Qv$, for $\la \in X$;
see \cite[Proposition 2.6.5]{CHW1}. Then $V(\lambda)_{\pi=\pm 1}$
is a simple $\cqg|_{\pi=\pm 1}$-module. Let
$X^+ =\{\la \in X \mid \ang{i, \la} \in \Z_{\ge 0}, \forall i\in I\}$ be the set of dominant weights.
Then  $\{V(\lambda)_{\pi=1} | \la \in X^+\}$ and $\{V(\lambda)_{\pi=-1} | \la \in X^+\}$
form a complete list of
pairwise non-isomorphic simple integrable modules of $\cqg|_{\pi=1}$ and of $\cqg|_{\pi=-1}$, respectively.

\begin{prop}
\label{Weight-A}
\begin{enumerate}
\item
The categories $\bC^\bt_{\pi=1}$ and $\bC^\bt_{\pi=-1}$ are equivalent.

\item
The characters of the integrable $\cqg|_{\pi=-1}$-module $V(\lambda)_{\pi=-1}$
and the integrable $\cqg|_{\pi=1}$-module $V(\lambda)_{\pi=1}$ coincide, for each $\la\in X^+$.
\end{enumerate}
\end{prop}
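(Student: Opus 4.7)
The plan is to reduce (1) to Proposition~\ref{prop:dotcatequiv} via a standard weight--versus--unital-modified equivalence, and then to deduce (2) from the explicit formula for $\dot\Psi$ in Theorem~\ref{thm:modauto}.

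For (1), I would first establish category equivalences $\bC^\bt_{\pi=\pm 1} \simeq \dot\bC^\bt_{\pi=\pm 1}$ parallel to Lusztig's identification in the non-super case. Concretely, given a weight module $M=\oplus_\lambda M_\lambda$, one lets $1_\lambda$ act as the projection onto $M_\lambda$; compatibility with the action of $K_\nu$ and $J_\nu$ is immediate from the definition of weight module. Conversely, a unital $\cmqg[\bt]|_{\pi=\pm 1}$-module $N$ inherits a weight decomposition $N=\oplus_\lambda 1_\lambda N$, and the relations \eqref{eq:altmodifiedrel} force $K_\nu$ and $J_\nu$ to act as the correct scalars on each piece. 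Chaining these equivalences with the twistor-induced equivalence of Proposition~\ref{prop:dotcatequiv} then yields (1).

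For (2), the essential observation is that $\dot\Psi(1_\lambda)=1_\lambda$. Hence, pulling back $V(\lambda)_{\pi=-1}\otimes_{\Q(v)}\Q(v,\bt)$ through $\dot\Psi$ produces a $\cmqg[\bt]|_{\pi=1}$-module with the same underlying vector space and the same weight-space decomposition, with the action of $E_i 1_\mu$ and $F_i 1_\mu$ merely rescaled by integer powers of $\bt$. The pullback remains integrable (rescaling by $\bt$-powers does not affect local nilpotence of the $E_i$ and $F_i$) and is generated by a highest weight vector of weight $\lambda$. By the classification of simple integrable highest weight modules for $\cqg|_{\pi=1}$ (extended to $\Q(v,\bt)$), the pullback must be isomorphic to $V(\lambda)_{\pi=1}\otimes_{\Q(v)}\Q(v,\bt)$, and comparing weight-space dimensions gives the character equality.

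The main technical point I anticipate is verifying that the weight/unital equivalence in (1) goes through smoothly in the covering setting with the enlarged Cartan containing $J_\nu$; this is parallel to the non-super case of Lusztig, but one must track that the $\pi$-dependent signs in \eqref{eq:Jweightrels} and \eqref{eq:commutatorrelation} are consistent with $1_\lambda$ acting as projection. Once this bookkeeping is complete, (2) follows almost immediately from the transparent form of $\dot\Psi$ together with uniqueness of simple integrable highest weight modules.
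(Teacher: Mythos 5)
Your proposal follows essentially the same route as the paper: chain the Lusztig-type equivalence $\bC^\bt_{\pi=\pm 1}\simeq\dot\bC^\bt_{\pi=\pm 1}$ with the twistor-induced equivalence $\dot\bC^\bt_{\pi=1}\simeq\dot\bC^\bt_{\pi=-1}$ of Proposition~\ref{prop:dotcatequiv}, then observe that $\dot\Psi(1_\lambda)=1_\lambda$ so the composite equivalence preserves weight-space decompositions and matches $V(\lambda)_{\pi=1}$ with $V(\lambda)_{\pi=-1}$. The paper cites \cite[\S 23.1.4]{L93} for the first link in the chain and states (2) in one sentence as a consequence of the equivalence preserving weight structure; you supply more of the bookkeeping (explicit pullback through $\dot\Psi$, rescaling of $E_i1_\mu$, $F_i1_\mu$ by $\bt$-powers, preservation of integrability and of the highest weight), which is the content implicitly invoked there.
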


\begin{proof}
The equivalence in (1) follows by a sequence of category equivalences:
$$
\bC^\bt_{\pi=1} \cong \dot\bC^\bt _{\pi=1}
\cong \dot\bC^\bt _{\pi=-1} \cong \bC^\bt_{\pi=-1},
$$
where the second equivalence follows by Proposition~\ref{prop:dotcatequiv},
the first equivalence is easy \cite[\S 23.1.4]{L93}, and
the third equivalence is completely analogous.

The category equivalences above always send each object $M$ to $M$ (where the weight
structure remains unchanged), and moreover, the equivalence from $\bC^\bt_{\pi=1}$ to $\bC^\bt_{\pi=-1}$ sends
$V(\lambda)_{\pi=1}$ to $V(\lambda)_{\pi=-1}$ while preserving weight space
decompositions. The proposition is proved.
\end{proof}

\begin{cor} \label{cor:catO}
The BGG categories $\mathcal O^\bt_{\pi=1}$ of modules over
the quantum group $\cqg[\bt]|_{\pi=1}$ and $\mathcal O^\bt_{\pi=-1}$
over the quantum supergroup $\cqg[\bt] |_{\pi=-1}$ are equivalent via $\dot\Psi$.
\end{cor}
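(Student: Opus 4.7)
The plan is to deduce the corollary from Proposition~\ref{Weight-A} by showing that the category equivalence induced by $\dot\Psi$ restricts to the BGG subcategories. Recall that an object $M$ of $\mathcal O^\bt_{\pi=\pm 1}$ is, by definition, a weight module in $\bC^\bt_{\pi=\pm 1}$ satisfying the standard BGG conditions: each weight space $M_\lambda$ is finite-dimensional, the set of weights of $M$ is contained in a finite union of cones $\mu_1 - Q^+,\ldots,\mu_r - Q^+$ with $Q^+ = \sum_{i\in I}\Z_{\ge 0}\, i'$, and the action of the positive Borel subalgebra generated by the $E_i$ is locally finite.

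The key point to exploit is the very explicit formula for $\dot\Psi$ in Theorem~\ref{thm:modauto}: on the generators of $\cmqg[\bt]$ it acts as $1_\lambda \mapsto 1_\lambda$, $E_i 1_\lambda \mapsto \bt^{a(i,\lambda)} E_i 1_\lambda$, and $F_i 1_\lambda \mapsto \bt^{\dot\phi(i,\lambda)} F_i 1_\lambda$, where $a(i,\lambda) = d_i\langle i,\lambda\rangle - \dot\phi(i,\lambda)$. Under the identification of unital $\cmqg[\bt]|_{\pi=\pm 1}$-modules with weight $\cqg[\bt]|_{\pi=\pm 1}$-modules used in Proposition~\ref{Weight-A}, the equivalence sends a module $M$ to $M$ with the same underlying vector space, the same weight-space decomposition (since $\dot\Psi(1_\lambda)=1_\lambda$), and with the actions of $E_i$ and $F_i$ rescaled by the nonzero scalars $\bt^{a(i,\lambda)}$ and $\bt^{\dot\phi(i,\lambda)}$, respectively.

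From this formula the preservation of the BGG axioms is immediate. First, weight-space dimensions are unchanged, so the finite-dimensionality condition transfers. Second, the weight support is unchanged, so containment in a finite union of cones $\mu_s - Q^+$ transfers. Third, for any finite-dimensional subspace $V \subset M$, the subspace $\cqg^+\cdot V$ computed in the twisted action agrees as a vector space with the one computed in the original action, because each monomial $E_{i_1}\cdots E_{i_n}\cdot v$ in a weight vector $v \in M_\lambda$ differs only by a nonzero scalar in $\bt^\Z$ from the corresponding monomial under the original action. Hence local $\cqg^+$-finiteness is preserved, and likewise in the opposite direction under $\dot\Psi^{-1}$.

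Combining these observations, the equivalence $\bC^\bt_{\pi=1}\cong \bC^\bt_{\pi=-1}$ of Proposition~\ref{Weight-A}(1) restricts to an equivalence $\mathcal O^\bt_{\pi=1}\cong \mathcal O^\bt_{\pi=-1}$. The only genuine subtlety is checking that the scalar rescaling of the Chevalley generators by powers of $\bt$ does not disturb the local finiteness condition; this is handled uniformly by the observation above that each $E$-monomial is multiplied by a single scalar depending only on the sequence $(i_1,\ldots,i_n)$ and the starting weight $\lambda$, so spans of monomials are preserved.
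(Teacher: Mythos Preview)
Your proposal is correct and follows exactly the approach implicit in the paper. In fact, the paper gives no proof for this corollary at all: it is stated immediately after Proposition~\ref{Weight-A} as an evident consequence of the equivalence $\bC^\bt_{\pi=1}\cong\bC^\bt_{\pi=-1}$, together with the observation (already used in that proof) that under $\dot\Psi$ the underlying vector space and weight decomposition are unchanged while the Chevalley generators are merely rescaled by nonzero powers of $\bt$; your argument simply makes this explicit by checking that the standard BGG axioms are preserved under such rescaling.
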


\begin{rem}
Thanks to the isomorphism of the integral forms $_{\mathbb Z} \!\cmqg|_{\pi=1}$ and  $_{\mathbb Z} \!\cmqg|_{\pi=-1}$
in Theorem~\ref{thm-Udot},
a version of category equivalence similar to Propositions ~\ref{Weight-A}
holds when specializing $v$ to be a root of unity.
\end{rem}

\begin{rem}
Proposition~\ref{Weight-A}(2) was stated in \cite{CHW1} without proof, and there has been another proof
given in \cite{KKO13}. A version of Proposition~\ref{Weight-A}(1) on the equivalence of the weight modules
of somewhat different algebras
over $\C(v)^\pi$
also appeared in \cite{KKO13}  with a very different proof.
Note that the notion of weight modules in {\em loc. cit.}  is nonstandard and subtle, and
the algebras formulated therein over $\C(v)$ (or $\C(v)^\pi$)
do not seem to admit rational forms or integral forms or modified forms as ours;
in particular, their formulation does not make sense when $v$ is a root of unity.
\end{rem}

\begin{rem}
Let $X^{\text{ev}} = \{\la \in X \mid \ang{i,\la} \in 2\Z, \forall i
\in I_{\ov{1}} \}$. Denote by $\mathcal O^\bt_{\pi=1,v=1}$ (and
resp., $\mathcal O^\bt_{\pi=-1,v=1}$) the BGG category of
$X^{\text{ev}}$-weighted modules over the Lie algebra (and resp.,
Lie superalgebra) associated to the super root datum $(Y,X, I,
\cdot)$. Using the technique of quantization of Lie bialgebras,
Etingof-Kazhdan \cite{EK} established an equivalence of categories
between $\mathcal O^\bt_{\pi=1,v=1}$ and $\mathcal O^\bt_{\pi=1}$.
As a super analogue, Geer \cite{Ge} similarly established a
quantization of Lie bisuperalgebras (Geer's super analogue was
formulated for the finite type basic Lie superalgebras, but it makes
sense for Kac-Moody  as done by Etingof-Kazhdan.) This leads to an
equivalence of categories between $\mathcal O^\bt_{\pi=-1,v=1}$ and
$\mathcal O^\bt_{\pi=-1}$ (where the restriction to the weights in
$X^{\text{ev}}$ is necessary; see the classification of integrable
modules in \cite{K}). When combining with our category equivalence
in Corollary~\ref{cor:catO}, we obtain an equivalence of highest
weight categories between BGG categories for Lie algebras and
superalgebras. This equivalence provides an irreducible character
formula in $\mathcal O^\bt_{\pi=-1,v=1}$ whenever the corresponding
irreducible module of $\mathcal O^\bt_{\pi=1,v=1}$ admits a solution
of the Kazhdan-Lusztig conjecture (by Beilinson-Bernstein,
Brylinski-Kashiwara, Kashiwara-Tanisaki).
\begin{center}
\begin{tikzpicture}[scale=1]
\draw[->] (-1.6,1.) -- (1.4,1.) node[midway,above] {$\dot\Psi$};
\draw (-2,1) node {$\mathcal O^\bt_{\pi=1}$};
\draw (2.3,1) node {$\mathcal O^\bt_{\pi=-1}$};
\draw (-2,-1) node {$\mathcal O^\bt_{\pi=1,v=1}$};
\draw (2.3,-1) node {$\mathcal O^\bt_{\pi=-1,v=1}$};

\draw (2.1,0) node {G};
\draw (-2.3,0) node {EK};

\draw[->] (-1.9,-.6) -- (-1.9,.6);
\draw[->] (2.4,-.6) -- (2.4,.6);
\draw[snake=snake,-] (-.9,-1) -- (.9,-1);
\draw[<-] (-1.2,-1) -- (-.9,-1);
\draw[<-] (1.4,-1) -- (.9,-1);
\end{tikzpicture}
\end{center}
\end{rem}


\subsection{Extended covering quantum groups}

We first work over a formal
parameter $t$.
Let $\mathbf{T}$ be the group algebra (in multiplicative form) of the group
$\Z[I]\times Y$, that is, the $\Q(v)[t^{\pm 1}]^\pi$-algebra with generators
$T_{\mu}, \Upsilon_{\nu}$, for $\mu\in Y$ and $\nu\in \Z[I]$, and relations
\begin{equation}\label{eq:Trelns}
T_\mu T_{\mu'}=T_{\mu+\mu'},\quad \Upsilon_\nu \Upsilon_{\nu'}=\Upsilon_{\nu+\nu'},
\quad T_\mu \Upsilon_\nu=\Upsilon_\nu T_\mu, \quad T_0=\Upsilon_0=1.
\end{equation}
We define an action of $\mathbf{T}$ on $\tpcqg$ by
\begin{equation}
T_\mu\cdot x=t^{\ang{\mu,\eta'}} x,\quad \Upsilon_\nu\cdot x=t^{\phi(\nu,\eta)} x\text{ for all }x\in \tpcqg_\eta.
\end{equation}
Then we form the semi-direct $\Q(v)[t^{\pm 1}]^\pi$-algebra
$\widehat\cqg[t^{\pm 1}]=\mathbf{T} \ltimes \tpcqg$ with respect to the above action of $\mathbf T$;
that is, $TxT^{-1}=T\cdot x$ for all $T\in \mathbf{T}$ and $x \in \tpcqg$.
By specialization, we obtain a $\Q(v,\bt)^\pi$-algebra
$\widehat\cqg[\bt]$, which is called the  {\em extended covering quantum group}.

 \begin{prop}\label{prop:qgiso}
There is a $\Q(\bt)$-algebra automorphism $\widehat{\Psi}$ on $\widehat\cqg[\bt]$
such that
\[
\widehat{\Psi}(E_i)=\bt_i^{-1}\Upsilon_i^{-1} T_{d_i i}E_i,\quad
\widehat{\Psi}(F_i)
=F_i\Upsilon_i,\quad
\widehat{\Psi}(K_\nu)=T_{-\nu} K_\nu,\quad
\widehat{\Psi}(J_\nu)=T_\nu^{2} J_\nu,\]
\[
\widehat{\Psi}(T_\nu)=T_\nu,\quad
\widehat{\Psi}(\Upsilon_\nu)=\Upsilon_\nu, \quad
\widehat{\Psi}(v)=\bt^{-1} v, \quad \widehat{\Psi}(\pi)=-\pi.\]
\end{prop}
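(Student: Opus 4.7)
The plan is to verify that the proposed assignments preserve each defining relation of $\widehat\cqg[\bt]$, and then to exhibit an explicit inverse, establishing that $\widehat{\Psi}$ is an automorphism. Because $\widehat{\Psi}$ fixes $T_\nu$ and $\Upsilon_\nu$, and because the images of $E_i, F_i, J_\mu, K_\mu$ differ from their preimages only by scalars in $\Q(\bt)$ and by weight-zero factors in $T,\Upsilon$, the $\Z[I]$-grading is preserved. Consequently the semi-direct product relation $T_\nu x T_\nu^{-1} = T_\nu\cdot x$ and the corresponding $\Upsilon$-conjugation identities are automatically preserved once well-definedness of the map holds.

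The relations \eqref{eq:JKrels}, \eqref{eq:Jweightrels}, and \eqref{eq:Kweightrels} follow by direct commutation. For example, in the $J$-weight relation, after commuting $T_\mu^2 J_\mu$ past $\bt_i^{-1}\Upsilon_i^{-1}T_{d_i i}E_i$, the two sides produce scalar factors $\pi^{\langle\mu,i'\rangle}$ and $(-\pi)^{\langle\mu,i'\rangle}\bt^{-2\langle\mu,i'\rangle}$ respectively, which coincide because $\bt^2=-1$. For the commutator relation \eqref{eq:commutatorrelation}, one computes $\widehat{\Psi}(\pi_iv_i-v_i^{-1}) = \bt_i(\pi_iv_i-v_i^{-1})$ and $\widehat{\Psi}(J_{d_i i}K_{d_i i}-K_{-d_i i}) = T_{d_i i}(J_{d_i i}K_{d_i i}-K_{-d_i i})$; on the LHS, commuting $F_iT_{d_i i}E_i$ produces a matching $T_{d_i i}$ and a scalar $t^{2d_i}=\bt^{2d_i}=(-1)^{p(i)}$ which, via bar-consistency, exactly rebalances the sign shift $\pi^{p(i)p(j)}\rightsquigarrow(-\pi)^{p(i)p(j)}$.

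The Serre relations \eqref{eq:Eserrerel} and \eqref{eq:Fserrerel} are the main obstacle. A short induction gives
\[
\widehat{\Psi}(E_i)^k = \bt^{-k d_i - \binom{k}{2}d_i}\,\Upsilon_i^{-k}\,T_{kd_ii}\,E_i^k,\qquad \widehat{\Psi}(F_i)^k = \bt^{-\binom{k}{2}d_i}\,F_i^k\,\Upsilon_i^k.
\]
Substituting these into the image of the Serre sum and pushing all $\Upsilon$'s and $T$'s to one side via $E_i^a\Upsilon_j^{-1} = t^{a\phi(j,i)}\Upsilon_j^{-1}E_i^a$ and $E_i^a T_{d_j j} = t^{-a(i\cdot j)}T_{d_j j}E_i^a$, the verification reduces to the vanishing of an expression of the form
\[
\sum_{k=0}^{b_{ij}}(-1)^k(-\pi)^{\binom{k}{2}p(i)+kp(i)p(j)}\bbinom{b_{ij}}{k}_{\bt_i^{-1}v_i,-\pi_i}\bt^{\heartsuit_k}E_i^{b_{ij}-k}E_jE_i^k,
\]
where $\heartsuit_k$ is the $k$-dependent $\bt$-exponent produced by the commutations. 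Using \eqref{eq:phisymmetrized} together with $i\cdot j\in 2\Z$, one checks that $\heartsuit_k$ agrees modulo $4$ with the exponent $\clubsuit$ from the proof of Theorem~\ref{thm:halfiso} up to a $k$-independent constant, so the vanishing follows from the identity $S_{ij}=0$ established there. For the $F$-Serre relation, the analogous computation produces an additional factor $(-1)^{k(b_{ij}-k)p(i)}$, which is $k$-independent (in fact identically $1$) because the super Cartan datum condition (c) forces $a_{ij}$ to be even whenever $p(i)=1$, whence $b_{ij}$ is odd and $k(b_{ij}-k)$ is always even.

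To conclude that $\widehat{\Psi}$ is an automorphism, define a candidate inverse $\widehat{\Psi}^{-1}$ on generators by
\[
E_i\mapsto \bt_i\Upsilon_i T_{-d_i i}E_i,\quad F_i\mapsto F_i\Upsilon_i^{-1},\quad K_\mu\mapsto T_\mu K_\mu,\quad J_\mu\mapsto T_{-2\mu}J_\mu,
\]
fixing $T_\nu,\Upsilon_\nu$ and sending $v\mapsto\bt v$, $\pi\mapsto-\pi$. An identical argument (with $\bt$ replaced by $\bt^{-1}$ throughout) shows $\widehat{\Psi}^{-1}$ is well defined, and one verifies $\widehat{\Psi}\circ\widehat{\Psi}^{-1}=\mathrm{id}=\widehat{\Psi}^{-1}\circ\widehat{\Psi}$ directly on the generating set.
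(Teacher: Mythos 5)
Your proof is correct and follows essentially the same route as the paper's: verify the defining relations (with the Serre relations reducing, modulo $4$, to the exponent $\clubsuit$ from Theorem~\ref{thm:halfiso} and the identity $S_{ij}=0$), and conclude. You verify the $E$-Serre relation in detail while the paper spells out the $F$-Serre and declares the other ``entirely similar''; this choice is immaterial. One small thing you add that the paper leaves implicit is the explicit inverse on generators, which cleanly settles bijectivity.
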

The automorphism $\widehat{\Psi}$  will be
called the  {\em twistor} on $\widehat\cqg[\bt]$.

\begin{proof}
We first show that such a map is well defined by showing that relations
\eqref{eq:JKrels}-\eqref{eq:Fserrerel} and \eqref{eq:Trelns} are satisfied
by the images of the generators. The relations
\eqref{eq:JKrels}-\eqref{eq:Kweightrels}
and \eqref{eq:Trelns} are straightforward to verify,
and we leave this to the reader.

Let us verify \eqref{eq:commutatorrelation}. On one hand, we have
\begin{align}
\notag \widehat{\Psi}(E_i) & \widehat{\Psi}(F_j)-\widehat{\Psi}(\pi)^{p(i)p(j)}\widehat{\Psi}(F_j)\widehat{\Psi}(E_i)\\
\notag &=\bt_i^{-1}\Upsilon_i^{-1} T_{d_i i}E_iF_j\Upsilon_j
  -(-\pi)^{p(i)p(j)}F_j\Upsilon_j\bt^{-1}_i\Upsilon_i^{-1} T_{d_i i}E_i\\
\notag &=\bt^{-d_i+d_j-\phi(j,i)}\Upsilon_i^{-1}\Upsilon_jT_{d_i i}
  \Big( E_iF_j-\bt^{i\cdot j+\phi(j,i)-\phi(i,j)}(-\pi)^{p(i)p(j)}F_jE_i \Big)\\
\label{eq:PsicommLHS}&=\bt^{-d_i+d_j-\phi(j,i)}\Upsilon_i\Upsilon_j^{-1}T_{d_i i}(E_iF_j-\pi^{p(i)p(j)}F_jE_i),
\end{align}
where the last equality follows from \eqref{eq:phisymmetrized} and $\bt^2=-1$.
On the other hand,
\begin{align}
\notag \delta_{ij}\frac{\widehat{\Psi}(J_{d_i i})
\widehat{\Psi}(K_{d_i i})-\widehat{\Psi}(K_{-d_i i})}{\widehat{\Psi}(\pi_i)\widehat{\Psi}(v_i)-\widehat{\Psi}(v_i)^{-1}}
&=\delta_{ij}\frac{T_{d_i i} J_{d_i i}K_{d_i i}-T_{d_i i} K_{-d_i i}}{(-\bt)^{-d_i} \pi_iv_i-\bt_i v_i^{-1}}\\
\label{eq:PsicommRHS}&=\delta_{ij}
\bt_i^{-1}T_{d_i i}\frac{ J_{d_i i}K_{d_i i}-K_{-d_i i}}{\pi_i v_i-v_i^{-1}}.
\end{align}
Then comparing \eqref{eq:PsicommLHS} and \eqref{eq:PsicommRHS}, we see that they
are equal for all $i,j\in I$,whence \eqref{eq:commutatorrelation}.

It remains to check the Serre relations \eqref{eq:Eserrerel} and
\eqref{eq:Fserrerel}. As these computations are entirely similar,
let us prove \eqref{eq:Fserrerel}.
Then recalling \eqref{eq:club}, we see that
\[(F_i\Upsilon_i)^{b_{ij}-k}(F_j\Upsilon_j)(F_i\Upsilon_i)^{k}
=\bt^{\binom{b_{ij}}{2}-k(b_{ij}-k)d_i+\clubsuit}F_i^{b_{ij}-k}F_jF_i^k\Upsilon_{b_{ij}i+j}.
\]
Hence as in the proof of Theorem~ \ref{thm:halfiso}, we have
\begin{align*}
&\sum_{k=0}^{b_{ij}} (-1)^k(-\pi)^{\binom{k}{2}p(i)+kp(i)p(j)}\bbinom{b_{ij}}{k}_{\bt^{-1}v_i,-\pi_i}
(F_i\Upsilon_i)^{b_{ij}-k}(F_j\Upsilon_j)(F_i\Upsilon_i)^k\\&
=\parens{\sum_{k=0}^{b_{ij}} (-1)^k\pi^{\binom{k}{2}p(i)+kp(i)p(j)}\bbinom{b_{ij}}{k}_{v_i,\pi_i}
F_i^{b_{ij}-k}F_jF_i^k}\bt^{\binom{b_{ij}}{2}+c(i,j)}\Upsilon_{b_{ij}i+j}=0.
\end{align*}
The proposition is proved.
\end{proof}

\begin{rem}  \label{rem:UUdot}
Here is a heuristic way of thinking about the extended covering quantum group and its twistor.
The algebra $\cqg$ acts on $\cmqg$ via
\[1\mapsto \sum_{\lambda\in X} 1_\lambda,\; E_i\mapsto \sum_{\lambda\in X} E_i 1_\lambda,\;
F_i\mapsto \sum_{\lambda\in X} F_i 1_\lambda,\;
K_\nu\mapsto \sum_{\lambda\in X} v^{\ang{\nu,\lambda}}1_\lambda,\;
J_\nu\mapsto \sum_{\lambda\in X} \pi^{\ang{\nu,\lambda}}1_\lambda.\]
Then $\dot\Psi$ induces an alternate
$\cqg$-module structure on $\cmqg$ via
\begin{align*}
\notag1\mapsto \sum &1_\lambda,\quad E_i\mapsto \sum_{\lambda\in X}
\bt^{d_i \ang{i,\lambda}-\dot{\phi}(i,\lambda)}E_i 1_\lambda,\quad
F_i\mapsto \sum_{\lambda\in X} \bt^{\dot{\phi}(i,\lambda)}F_i 1_\lambda,\\
&\label{eq:new action on dot U}
K_\nu\mapsto \sum_{\lambda\in X} (\bt^{-1}v)^{\ang{\nu,\lambda}}1_\lambda,\quad
J_\nu\mapsto \sum_{\lambda\in X} (-\pi)^{\ang{\nu,\lambda}}1_\lambda.
\end{align*}
Merging these two actions
leads to the introduction of new semisimple elements $T_\nu$ and $\Upsilon_\mu$ such that
$
 T_\nu\mapsto \sum_{\lambda\in X} t^{\ang{\nu,\lambda}}1_\lambda
$
and
$\Upsilon_\mu\mapsto \sum_{\lambda\in X}t^{\dot{\phi}(\mu,\lambda)} 1_\lambda.
$
\end{rem}

\begin{rem}
Some construction similar to the twistor $\widehat{\Psi}$ as in Propoisition~\ref{prop:qgiso} appeared in \cite{KKO13}.
In contrast to {\em loc. cit.}, our formula for $\widehat{\Psi}$ is very explicit.
\end{rem}

By specialization, the twistor on $\widehat\cqg[\bt]$
leads to an isomorphism between the extended super and non-super quantum groups.

\begin{cor}\label{cor:isoquantumgrps}
The $\Q(\bt)$-algebras $\widehat\cqg[\bt] |_{\pi=1}$ and $\widehat\cqg[\bt]_{\pi=-1}$
are isomorphic under $\widehat{\Psi}$.
\end{cor}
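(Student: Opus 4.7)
The plan is to derive this as an immediate consequence of Proposition~\ref{prop:qgiso} by the same idempotent-swapping argument used to obtain Corollary~\ref{cor:half super to non} from Theorem~\ref{thm:halfiso}. The key point is that the twistor $\widehat{\Psi}$ acts on the scalar $\pi$ by $\widehat{\Psi}(\pi) = -\pi$, so it interchanges the two central characters that cut out the $\pi=1$ and $\pi=-1$ specializations.

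More precisely, I would first recall that $\widehat\cqg[\bt]$ is a $\Q(v,\bt)^\pi$-algebra. Since $\frac{1}{2}\in \Q(v,\bt)^\pi$, the orthogonal idempotents $\var_\pm = \frac{1\pm\pi}{2}$ from \eqref{eq:pi idempotent} yield the decomposition
\[
\widehat\cqg[\bt] = \var_+ \widehat\cqg[\bt] \,\oplus\, \var_- \widehat\cqg[\bt],
\]
and the canonical identifications $\widehat\cqg[\bt]|_{\pi=\pm 1} \cong \var_\pm \widehat\cqg[\bt]$ as $\Q(v,\bt)$-algebras. Because $\widehat{\Psi}$ is a $\Q(\bt)$-algebra automorphism of $\widehat\cqg[\bt]$ with $\widehat{\Psi}(\pi) = -\pi$, we have $\widehat{\Psi}(\var_\pm) = \var_\mp$, and therefore $\widehat{\Psi}$ restricts to a $\Q(\bt)$-linear bijection $\var_+ \widehat\cqg[\bt] \isoto \var_- \widehat\cqg[\bt]$ which respects the multiplication (since $\widehat{\Psi}$ does and these are subalgebras cut out by a central idempotent).

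There is essentially no obstacle here: the nontrivial content was already built into Proposition~\ref{prop:qgiso}, whose proof verifies the defining relations \eqref{eq:JKrels}--\eqref{eq:Fserrerel} and \eqref{eq:Trelns} under $\widehat{\Psi}$. The only thing to double-check is that the restriction of $\widehat{\Psi}$ is well-defined as an algebra map between the two specializations, which is automatic from the fact that $\widehat{\Psi}$ sends one summand to the other. Translating the restriction back through the identifications $\var_\pm \widehat\cqg[\bt] \cong \widehat\cqg[\bt]|_{\pi=\pm 1}$ then yields the desired $\Q(\bt)$-algebra isomorphism $\widehat\cqg[\bt]|_{\pi=1} \cong \widehat\cqg[\bt]|_{\pi=-1}$, completing the proof.
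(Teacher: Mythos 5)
Your argument is correct and matches the paper's intent: the paper states the corollary follows "by specialization," which is exactly the idempotent-swapping mechanism $\widehat{\Psi}(\var_\pm)=\var_\mp$ you spell out, mirroring the derivation of Corollary~\ref{cor:half super to non} from Theorem~\ref{thm:halfiso}. No issues.
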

%
%


The twistor $\Psi: \chqg[\bt] \rightarrow (\chqg [\bt], *)$ in Theorem~\ref{thm:halfiso}
is  intimately related to the twistor $\widehat{\Psi}: \widehat \cqg[\bt] \rightarrow \widehat \cqg[\bt]$
in Proposition~\ref{prop:qgiso}, as we shall describe.

There is an injective $\Q(v)[t^{\pm 1}]^\pi$-algebra homomorphism (see \cite[\S 2.1]{CHW1})
\begin{equation}\label{eq:minusmap}
(\cdot)^-: \tpchqg \longrightarrow \tpcqg,
\end{equation}
such that $\theta_i^-=F_i$ for all $i\in I.$

\begin{lem}
There is an injective $\Q(v)[t^{\pm 1}]^\pi$-algebra homomorphism
\begin{equation*}
\chi: (\tpchqg, *) \longrightarrow \widehat\cqg[t^{\pm 1}]
\end{equation*}
such that
$$
\chi(x) = x^-\Upsilon_{\nu}, \qquad \forall x \in \tpchqg_\nu.
$$
\end{lem}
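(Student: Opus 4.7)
The plan is to define $\chi$ on each $\Z[I]$-homogeneous component by the prescribed formula $\chi(x) = x^- \Upsilon_\nu$ for $x \in \tpchqg_\nu$, and extend by $\Q(v)[t^{\pm 1}]^\pi$-linearity. Since $\tpchqg$ is $\Z[I]$-graded, this determines $\chi$ on all of $\tpchqg$. The main task is then to verify that $\chi$ intertwines the twisted product $*$ with the product in $\widehat\cqg[t^{\pm 1}]$, after which injectivity follows easily.

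For the multiplicativity, I would compute both sides on homogeneous elements $x, y$ in parallel. On one hand, by the definition of $*$ in \eqref{eq:x*y} together with the fact that $(\cdot)^-$ in \eqref{eq:minusmap} is an algebra homomorphism with respect to the undeformed multiplication, we obtain
\[
\chi(x * y) = \chi\bigl(t^{\phi(|x|,|y|)} xy\bigr) = t^{\phi(|x|,|y|)} x^- y^- \Upsilon_{|x|+|y|}.
\]
On the other hand, using the semidirect-product relation $\Upsilon_\nu z \Upsilon_\nu^{-1} = \Upsilon_\nu \cdot z$ together with the given action of $\mathbf T$ on $\tpcqg$, I would push $\Upsilon_{|x|}$ past $y^-$ to obtain
\[
\chi(x)\chi(y) = x^- \Upsilon_{|x|} y^- \Upsilon_{|y|} = x^- \bigl(\Upsilon_{|x|} \cdot y^- \bigr) \Upsilon_{|x|+|y|} = t^{\phi(|x|,|y|)} x^- y^- \Upsilon_{|x|+|y|},
\]
which agrees with $\chi(x*y)$. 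Alternatively, one could check compatibility generator-by-generator against the twisted Serre relations \eqref{eq:Serre*}, but the above direct computation is cleaner.

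Injectivity of $\chi$ follows from two observations: first, $(\cdot)^-$ is injective by \eqref{eq:minusmap}; second, $\widehat\cqg[t^{\pm 1}] = \mathbf T \ltimes \tpcqg$ means every element has a unique expression as a finite sum $\sum_\nu z_\nu \Upsilon_\nu$ with $z_\nu \in \tpcqg$, so the vanishing of $\chi\bigl(\sum_\nu x_\nu\bigr) = \sum_\nu x_\nu^- \Upsilon_\nu$ forces each $x_\nu^- = 0$, hence $x_\nu = 0$, hence $x=0$.

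The main obstacle is the careful bookkeeping of signs in the action $\Upsilon_\nu \cdot z = t^{\phi(\nu,\eta)} z$ when $z = y^-$: since $y^- \in \tpcqg$ and the two algebras carry opposite conventions for the $\Z[I]$-grading (with $|F_i| = -i$ in $\cqg$ while $|\theta_i| = i$ in $\chqg$), one must confirm that the relevant evaluation of $\phi$ yields precisely $t^{\phi(|x|,|y|)}$ rather than its inverse. This amounts to applying the given definitions consistently and reduces to a single bilinearity check on $\Z[I] \times \Z[I]$.
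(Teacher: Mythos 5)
Your approach is the same one the paper uses: the paper's proof is exactly the one-line identity $(x*y)^-\Upsilon_{|x|+|y|}=x^-\Upsilon_{|x|}\,y^-\Upsilon_{|y|}$, verified by pushing $\Upsilon_{|x|}$ past $y^-$ via the semidirect-product relation, and the injectivity argument is the obvious one you give. So your strategy is correct.

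However, the last equality in your second display is not in fact justified by what you have written, and the hedge in your final paragraph does not resolve it. You write $\chi(x)\chi(y)=x^-\bigl(\Upsilon_{|x|}\cdot y^-\bigr)\Upsilon_{|x|+|y|}=t^{\phi(|x|,|y|)}x^-y^-\Upsilon_{|x|+|y|}$, but under the paper's stated conventions the element $y^-$ lies in $\tpcqg_{-|y|}$ (since $(\cdot)^-$ sends $\theta_j\mapsto F_j$ and $|F_j|=-j$), so the action formula $\Upsilon_\nu\cdot z=t^{\phi(\nu,\eta)}z$ for $z\in\tpcqg_\eta$ gives $\Upsilon_{|x|}\cdot y^-=t^{\phi(|x|,-|y|)}y^-=t^{-\phi(|x|,|y|)}y^-$, by bilinearity of $\phi$. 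That is, the ``single bilinearity check'' you defer to actually produces the \emph{opposite} sign from the one you need, and it does not go away by being more careful: under a literal reading of \eqref{eq:x*y} and the definition of the $\mathbf{T}$-action, $\chi(x)\chi(y)=t^{-\phi(|x|,|y|)}x^-y^-\Upsilon_{|x|+|y|}$ while $\chi(x*y)=t^{\phi(|x|,|y|)}x^-y^-\Upsilon_{|x|+|y|}$, and these are not equal over $\Q(v)[t^{\pm1}]^\pi$ unless $\phi(|x|,|y|)=0$. Your proposal needs to acknowledge that the identity only holds after adjusting a sign convention somewhere (in the exponent of \eqref{eq:x*y}, in the exponent of the $\mathbf{T}$-action, or in replacing $\Upsilon_\nu$ by $\Upsilon_{-\nu}$ in the formula for $\chi$); as written, the displayed chain of equalities asserts something false without comment, and flagging it as a routine bookkeeping check to be done later is not enough, because the check fails.

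(For what it is worth, the paper's own proof of this lemma is equally terse and appears to carry the same latent sign discrepancy, which is washed out in the later applications since there $t$ is specialized to $\bt$ and the relevant exponents are controlled modulo~$4$; but over the Laurent ring in $t$, the sign matters.)
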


\begin{proof}
One checks by definition that, for $x,y\in \tpchqg$ homogeneous,
\[
(x*y)^-\Upsilon_{|x|+|y|}=x^-\Upsilon_{|x|}
 y^-\Upsilon_{|y|}.
 \]
 The lemma is proved.
\end{proof}

Now specializing $t$ to $\bt$ for $\chi$ and $(\cdot)^-$ above, we obtain
an injective $\Q(v, \bt)^\pi$-algebra homomorphism
$
\chi: (\schqg, *) \longrightarrow \widehat\cqg[\bt]$,
and an injective $\Q(v,\bt)^\pi$-algebra homomorphism
$(\cdot)^-: \schqg \longrightarrow \cqg[\bt]$.
The following proposition can be verified by definitions, which we leave to the reader.

\begin{prop}
We have a
commutative diagram of $\Q(\bt)$-algebra homomorphisms:
\begin{center}
\begin{tikzpicture}
\draw (.2,0) node{$\schqg$};
\draw (0,-1.7) node{$(\schqg,*)$};
\draw (2.5,0) node{$\cqg[\bt]$};
\draw (2.5,-1.7) node{$\widehat \cqg[\bt]$};
\draw[right hook-latex,thick] (.7,0)--(2,0) node[midway,above]{$(\cdot)^-$};
\draw[right hook-latex,thick] (.7,-1.7)--(2,-1.7) node[midway,above]{$\chi$};
\draw[-latex,thick] (.2,-.3) -- (.2,-1.4)node[midway,left]{$\Psi$};
\draw[-latex,thick] (2.5,-.3) -- (2.5,-1.4)node[midway,right]{$\widehat{\Psi}$};
\end{tikzpicture}
\end{center}
\end{prop}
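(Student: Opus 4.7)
The plan is to verify the diagram by checking commutativity on generators, using that all four arrows are algebra homomorphisms of $\Q(\bt)$-algebras.

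\medskip

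\noindent\textbf{Reduction to generators.}
First, I would observe that the two composite maps $\widehat{\Psi}\circ(\cdot)^-$ and $\chi\circ\Psi$ are both $\Q(\bt)$-algebra homomorphisms from $\schqg$ (with its ordinary multiplication) into $\widehat\cqg[\bt]$. On the left, $(\cdot)^-$ is an algebra homomorphism by \eqref{eq:minusmap} and $\widehat{\Psi}$ is an algebra automorphism by Proposition~\ref{prop:qgiso}. On the right, $\Psi:\schqg\to(\schqg,*)$ is an algebra isomorphism by Theorem~\ref{thm:halfiso} and $\chi:(\schqg,*)\to\widehat\cqg[\bt]$ is an algebra homomorphism by the preceding lemma. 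Hence each composition respects the usual product on $\schqg$, and it suffices to check the two maps agree on the algebra generators $\theta_i$ ($i\in I$) together with the scalars $v$ and $\pi$.

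\medskip

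\noindent\textbf{Agreement on scalars.}
On the scalars there is nothing to prove: both $\Psi$ and $\widehat{\Psi}$ send $v\mapsto \bt^{-1}v$ and $\pi\mapsto -\pi$ (by \eqref{eq:psi} and Proposition~\ref{prop:qgiso}), while $(\cdot)^-$ and $\chi$ act as identity on $\Q(v,\bt)^\pi$. Hence $\widehat{\Psi}\circ(\cdot)^-$ and $\chi\circ\Psi$ agree on the base ring.

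\medskip

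\noindent\textbf{Agreement on $\theta_i$.}
On a generator $\theta_i\in\schqg_i$ we compute both sides directly. For the left column composed with the bottom row, we use $\Psi(\theta_i)=\theta_i$ (by \eqref{eq:psi}) and then $\chi(\theta_i)=(\theta_i)^-\Upsilon_i=F_i\Upsilon_i$. For the top row composed with the right column, we use $(\theta_i)^-=F_i$ and then $\widehat{\Psi}(F_i)=F_i\Upsilon_i$. Both outputs coincide, so
\begin{equation*}
\widehat{\Psi}\bigl((\theta_i)^-\bigr)=F_i\Upsilon_i=\chi\bigl(\Psi(\theta_i)\bigr).
\end{equation*}

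\medskip

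\noindent\textbf{Conclusion.}
Since the two algebra homomorphisms agree on a generating set of $\schqg$ and on the coefficient ring, they coincide on all of $\schqg$, proving commutativity of the diagram. There is essentially no obstacle here: the nontrivial content is already packaged into Theorem~\ref{thm:halfiso} (which guarantees $\Psi$ respects the twisted product) and into Proposition~\ref{prop:qgiso} (where the formula $\widehat{\Psi}(F_i)=F_i\Upsilon_i$ was chosen precisely so as to match the conjugation by $\Upsilon$ built into $\chi$). The only subtle point worth double-checking is that the definitions of $\chi$ and of $\widehat{\Psi}$ on $F_i$ both use the same bilinear form $\phi$, which is exactly the compatibility that makes the diagram commute on the $F$-side without any extra powers of $\bt$.
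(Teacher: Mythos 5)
Your proof is correct and follows exactly the route the paper implies: the paper leaves the verification to the reader ("can be verified by definitions"), and you have supplied the expected argument---both composites are $\Q(\bt)$-algebra homomorphisms from $\schqg$ (with its ordinary product) into $\widehat\cqg[\bt]$, so it suffices to check agreement on the scalars $v,\pi$ and the generators $\theta_i$, where both sides yield $F_i\Upsilon_i$.
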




\begin{thebibliography}{CHW2}\frenchspacing


\bibitem[CHW1]{CHW1} S.~Clark, D.~Hill and W.~Wang, {\em Quantum supergroups I. Foundations},
Transformation Groups (to appear), DOI 10.1007/s00031-013-9247-4,
arXiv:1301.1665.

\bibitem[CHW2]{CHW2} S.~Clark, D.~Hill and W.~Wang, {\em Quantum supergroups II. Canonical basis},
arXiv:1304.7837v2.

\bibitem[CW]{CW} S. Clark and W. Wang,
{\em  Canonical basis for quantum $\mathfrak{osp}(1|2)$}, Lett. Math. Phys. \textbf{103} (2013),  207--231.

\bibitem[EKL]{EKL} A.~Ellis, M.~Khovanov and A.~Lauda,
{\em The odd nilHecke algebra and its diagrammatics}, IMRN, 2013,
arXiv:1111.1320.

\bibitem[EL]{EL} A.~Ellis and A.~Lauda,
{\em An odd categorification of $U_q(\mathfrak{sl}_2)$},
arXiv:1307.7816.

\bibitem[EK]{EK}  P.~Etingof and D.~Kazhdan,
{\em Quantization of Lie bialgebras. VI. Quantization of generalized Kac-Moody algebras},
Transform. Groups {\bf 13} (2008), 527--539.

\bibitem[FL1]{FL12} Z. Fan and Y. Li,
{\em Two-parameter quantum algebras, canonical bases and categorifications},  2012, arXiv:1303.2429.

\bibitem[FL2]{FL13} Z. Fan and Y. Li, {\em A geometric setting for quantum $\mathfrak{osp}(1|2)$},
Trans. Amer. Math. Soc.  (to appear), arXiv:1305.0710.

\bibitem[G]{Ge} N. Geer,
\emph{Etingof-Kazhdan quantization of Lie superbialgebras}. Adv.
Math. {\bf 207} (2006),  1--38.

\bibitem[HW]{HW}
D.~Hill and W.~Wang, \emph{{Categorification of quantum Kac-Moody
  superalgebras}}, Trans. Amer. Math.
Soc. (to appear), arXiv:1202.2769v2.

\bibitem[Kac]{Kac}
V.~Kac, {\em Infinite-dimensional algebras, Dedekind's
$\eta$--function, classical M\"obius function and the very strange
formula}, Adv. Math. {\bf 30} (1978), 85--136.

\bibitem[K]
{K} M.~ Kashiwara,
{\em On crystal bases of the $Q$-analogue of universal enveloping
algebras}, Duke Math.~J.~\textbf{63} (1991), 456--516.

\bibitem[KKT]{KKT} S.-J.~Kang, M.~Kashiwara and S.~Tsuchioka,
{\em Quiver Hecke superalgebras}, arXiv:1107.1039.

\bibitem[KKO]{KKO13} S.-J. Kang, M. Kashiwara and S.-J. Oh,
              {\em Supercategorification of quantum Kac-Moody algebras~ II},   arXiv:1303.1916.

\bibitem[La]{La} E. Lanzman,
    {\em The Zhang transformation and
    $U_q(osp(1, 2l))$-Verma modules annihilators}, Alg. and Repr. Theory {\textbf 5} (2002), 235--258.

\bibitem[Li]{Li12} Y. Li, {\em A geometric realization of modified quantum algebras}.   arXiv:1007.5384.


\bibitem[Lu1]{Lu1} G.~ Lusztig, {\em Canonical bases arising from
quantized enveloping algebras}, J.~Amer.~Math.~Soc.~{\bf 3} (1990), 447--498.

\bibitem[Lu2]{L93} G. Lusztig,
        {\em Introduction to Quantum Groups},
        Progress in Math. \textbf{110}, Birkh\"{a}user {1993}.

\bibitem[W]{W} W.~ Wang,
\textit{Double affine Hecke algebras for the spin symmetric group},
Math. Res. Lett. {\bf 16} (2009), 1071--1085.

\bibitem[Y1]{Y1} H.~Yamane,
{\em Quantized enveloping algebras associated with simple Lie
superalgebras and their universal R-matrices}, Publ. Res. Inst.
Math. Sci. {\bf 30} (1994), 15--87.

\bibitem[Y2]{Y2} H.~Yamane,
{\em On defining relations of affine Lie superalgebras and affine
quantized universal enveloping superalgebras}, Pub. Res. Inst. Math.
Sci. {\bf 35} (1999), 321--390.


\end{thebibliography}
\end{document}